\documentclass{amsart}
\usepackage{amssymb}



\newtheorem{theorem}{Theorem}[section]
\newtheorem{proposition}[theorem]{Proposition}
\newtheorem{lemma}[theorem]{Lemma}

\theoremstyle{definition}

\numberwithin{equation}{section}

\begin{document}
\title[A finite field analogue of the Appell series $F_4$]
{A finite field analogue of the Appell series $F_4$}



\author{Mohit Tripathi}
\address{Department of Mathematics, Indian Institute of Technology Guwahati, North Guwahati, Guwahati-781039, Assam, INDIA}
\curraddr{}
\email{m.tripathi@iitg.ac.in}

\author{Rupam Barman}
\address{Department of Mathematics, Indian Institute of Technology Guwahati, North Guwahati, Guwahati-781039, Assam, INDIA}
\curraddr{}
\email{rupam@iitg.ac.in}
\thanks{}


\subjclass[2010]{33C65, 11T24.}
\date{January 3, 2018}
\keywords{Hypergeometric series; Appell series; Gauss and Jacobi sums; Hypergeometric series over finite fields.}
\thanks{}
\begin{abstract}
We define a function $F_4^{\ast}$ as a finite field analogue of the classical Appell series $F_4$ using Gauss sums. We establish identities for $F_4^{\ast}$
analogous to those satisfied by the classical Appell series $F_4$.

\end{abstract}
\maketitle
\section{Introduction and statement of results}
For a complex number $a$, the rising factorial or the Pochhammer symbol is defined as $(a)_0=1$ and $(a)_k=a(a+1)\cdots (a+k-1), ~k\geq 1$.
If $\Gamma(x)$ denotes the gamma function, then we have
$(a)_k=\dfrac{\Gamma(a+k)}{\Gamma(a)}$.
For $a, b, c\in\mathbb{C}$, where $c$ is neither zero nor a negative integer, the hypergeometric series $F(a, b; c; x)$ is defined by
\begin{align}
F(a, b; c; x):=\sum_{k=0}^{\infty}\frac{(a)_k (b)_k}{(c)_k}\cdot\frac{x^k}{k!}.\notag
\end{align}
If we consider the product of two hypergeometric series $F(a, b; c; x)$ and $F(a', b'; c'; y)$, we obtain a double series.
Among them Appell's hypergeometric series of two variables
are the most important ones, namely \cite{bailey}
\begin{align}
F_{1}(a;b,b^{\prime};c;x,y) &= \sum_{m,n \geq 0}\frac{(a)_{m+n}(b)_{m}(b^{\prime})_{n}}{m!n!(c)_{m+n}}x^{m}y^{n}, |x|<1, |y|<1;\notag\\
F_{2}(a;b,b^{\prime};c,c^{\prime};x,y) &= \sum_{m,n \geq 0}\frac{(a)_{m+n}(b)_{m}(b^{\prime})_{n}}{m!n!(c)_{m}(c^{\prime})_{n}}x^{m}y^{n}, |x|+|y|<1;\notag\\
F_{3}(a,a^{\prime};b,b^{\prime};c;x,y) &= \sum_{m,n \geq 0}\frac{(a)_{m}(a^{\prime})_{n}(b)_{m}(b^{\prime})_{n}}{m!n!(c)_{m+n}}x^{m}y^{n}, |x|<1, |y|<1; \notag\\
F_{4}(a;b;c,c^{\prime};x,y) &= \sum_{m,n \geq 0}\frac{(a)_{m+n}(b)_{m+n}}{m!n!(c)_{m}(c^{\prime})_{n}}x^{m}y^{n}, |x|^{\frac{1}{2}}+|y|^{\frac{1}{2}}< 1.\notag
\end{align}
Let $p$ be an odd prime, and let $\mathbb{F}_q$ denote the finite field with $q$ elements, where $q=p^r, r\geq 1$.
Let $\widehat{\mathbb{F}_q^{\times}}$ be the group of all multiplicative
characters on $\mathbb{F}_q^{\times}$. We extend the domain of each $\chi\in \widehat{\mathbb{F}_q^{\times}}$ to $\mathbb{F}_q$ by setting $\chi(0)=0$
including the trivial character $\varepsilon$. We denote by $\overline{\chi}$ the character inverse of a multiplicative character $\chi$.
Recently, there has been a study for Appell series over finite fields. For example, see \cite{BST, bing-he-2, bing-he-1, li-li-mao}. 
Appell series $F_1, F_2, F_3$ and $F_4$ have integral representations. Using an integral representation of $F_1$, Li et. al. \cite{li-li-mao} defined a finite field
analogue of $F_1$ as follows. Let $A, A', B, B', C, C'$ be multiplicative characters on $\mathbb{F}_q$. 
For $x, y\in \mathbb{F}_q$, the finite field Appell series $F_1$ is
defined by
\begin{align}\label{bing-he-def-1}
 F_1(A; B, B'; C; x, y)=\varepsilon(xy)AC(-1)\sum_{u\in \mathbb{F}_q}A(u)\overline{A}C(1-u)\overline{B}(1-ux)\overline{B'}(1-uy).
\end{align}
Using an integral representation of $F_2$, He et. al. \cite{bing-he-1} defined the following function as a finite field
analogue of $F_2$.
\begin{align}\label{bing-he-def-2}
& F_2(A; B, B'; C, C'; x, y)\notag\\
& =\varepsilon(xy)BB'CC'(-1)\sum_{u, v\in \mathbb{F}_q}B(u)B'(v)\overline{B}C(1-u)\overline{B'}C'(1-v)\overline{A}(1-ux-vy).
\end{align}
Similarly, using an integral representation of $F_3$, He \cite{bing-he-2} defined the following function as a finite field analogue of $F_3$.
\begin{align}\label{bing-he-def-3}
 &F_3(A, A'; B, B'; C; x, y)\notag\\
 &=\varepsilon(xy)BB'(-1)\sum_{u, v\in \mathbb{F}_q}B(u)B'(v)C\overline{BB'}(1-u-v)\overline{A}(1-ux)\overline{A'}(1-vy).
\end{align}
Many transformation identities are also established for the finite field Appell series analogous to those satisfied by the classical Appell series $F_1, F_2$, and $F_3$ \cite{bing-he-2, bing-he-1, li-li-mao}.
\par The finite field analogues of Appell series introduced in \cite{bing-he-2, bing-he-1, li-li-mao} are in the spirit of Greene's finite field hypergeometric functions \cite{greene}.
Greene used the integral representation of classical hypergeometric series to introduce finite field hypergeometric functions, and is essentially defined using Jacobi sums.
There are other finite field analogues of classical hypergeometric series. For example, see \cite{katz, mccarthy3, FL}.
In \cite{BST}, the authors with Saikia study finite field analogues of Appell series in the spirit of the finite field hypergeometric series defined by McCarthy in \cite{mccarthy3}.
For a multiplicative character $\chi$, let $g(\chi)$ denote the Gauss sum as defined in Section 2. For $A_0, A_1, \ldots A_n, B_1, B_2, \ldots, B_n\in \widehat{\mathbb{F}_q^{\times}}$, 
the McCarthy's finite field hypergeometric function ${_{n+1}F}^{\ast}_n$ is given by
\begin{align}\label{mcCarty's_defn}
&{_{n+1}F}_n\left(\begin{array}{cccc}
                A_0, & A_1, & \ldots, & A_n\\
                 & B_1, & \ldots, & B_n
              \end{array}\mid x \right)^{\ast}\notag\\
              &=\frac{1}{q-1}\sum_{\chi\in \widehat{\mathbb{F}_q^{\times}}}\prod_{i=0}^{n}\frac{g(A_i\chi)}{g(A_i)}\prod_{j=1}^n\frac{g(\overline{B_j\chi})}{g(\overline{B_j})}g(\overline{\chi})
\chi(-1)^{n+1}\chi(x).
\end{align} 
McCarthy's function is defined purely in terms of Gauss sums. Since Greene's function is defined using Jacobi sums, often it is necessary
to impose conditions on the parameters to relate the Jacobi sums to the required product of Gauss
sums. But, McCarthy's function does not need such conditions. Unlike to Greene's function, one can interchange the positions of any two of the parameters $A_i$'s or $B_i$'s
in the McCarthy's function as in the case of classical hypergeometric series. With this motivation, the authors with Saikia \cite{BST} define three functions 
$F_1^{\ast}, F_{2}^{\ast}$, and $F_3^{\ast}$ as finite field analogues of the Appell series $F_1, F_2$, and $F_3$, respectively.
The classical Appell series are defined after multiplying two $_{2}F_1$-hypergeometric series and then arranging the products of the rising factorials in some order. 
Considering products of 
McCarthy's ${_2F_1}$-hypergeometric functions, the functions $F_1^{\ast}, F_{2}^{\ast}$, and $F_3^{\ast}$ are introduced as finite field analogues of Appell series.
Let $A, A', B, B',C, C'$ be multiplicative characters on $\mathbb{F}_q$. For $x, y\in\mathbb{F}_q$, the functions $F_1^{\ast}, F_{2}^{\ast}$, and $F_3^{\ast}$ are 
defined as follows \cite{BST}.
\begin{align}
\label{def1}
 &F_{1}(A;B,B^{\prime};C;x,y)^{*} \notag\\
 &= \frac{1}{(q-1)^2} \sum_{\psi,\chi \in\widehat{\mathbb{F}_q^{\times}}}\frac{g(A\chi\psi)g(B\psi)g(B^{\prime}\chi)
 g(\overline{C\chi\psi})g(\overline{\psi})g(\overline{\chi})}{g(A)g(B)g(B^{\prime})g(\overline{C})}\psi(x)\chi(y);\\
 \label{def2}
 &F_{2}(A;B,B^{\prime};C,C^{\prime};x,y)^{*}\notag\\
 &= \frac{1}{(q-1)^2} \sum_{\psi, \chi\in\widehat{\mathbb{F}_q^{\times}}}
\frac{g(A\chi\psi)g(B\psi)g(B^{\prime}\chi)g(\overline{C\psi})
g(\overline{C^{\prime}\chi})g(\overline{\psi})g(\overline{\chi})}{g(A)g(B)g(B^{\prime})g(\overline{C})g(\overline{C^{\prime}})}\psi(x)\chi(y);\\
\label{def3}
&F_{3}(A,A^{\prime};B,B^{\prime};C;x,y)^{*}\notag\\
&= \frac{1}{(q-1)^2} \sum_{\psi, \chi\in\widehat{\mathbb{F}_q^{\times}}}\frac{g(A\psi)g(A^{\prime}\chi)g(B\psi)g(B^{\prime}\chi)g(\overline{C\chi\psi})
 g(\overline{\psi})g(\overline{\chi})}{g(A)g(A^{\prime})g(B)g(B^{\prime})g(\overline{C})}\psi(x)\chi(y).
\end{align}  
We know that the rising factorial $(a)_k$ can be expressed as $(a)_k=\dfrac{\Gamma(a+k)}{\Gamma(a)}$, where $\Gamma(\cdot)$ is the gamma function. Since the Gauss sum is the finite 
field analogue of the gamma function and Appell series are defined using products of rising factorials, it seems to be more appropriate to define finite field analogues of Appell 
series using Gauss sums.
Though the finite field Appell series $F_1$, $F_2$ and $F_3$ are defined using integral
representations of Appell series in \cite{li-li-mao, bing-he-1, bing-he-2}, however it is shown in \cite{BST} that they are closely related to the above functions 
$F_1^{\ast}$, $F_2^{\ast}$, and $F_3^{\ast}$.
\par We now state the following double integral representation of the Appell series $F_4$ from \cite{chaundy}.
\begin{align}\label{int-rep-f4}
&F_4(a; b; c, c'; x(1-y), y(1-x))\notag\\
&=\frac{\Gamma(c)\Gamma(c')}{\Gamma(a)\Gamma(b)\Gamma(c-a)\Gamma(c'-b)}\int_{0}^{1}\int_{0}^{1}[u^{a-1}v^{b-1}(1-u)^{c-a-1}(1-v)^{c'-a-1}\notag\\
&\hspace{1cm}\times (1-ux)^{a-c-c'+1}(1-vy)^{b-c-c'+1}(1-ux-vy)^{c+c'-a-b-1}]dudv.
\end{align}
The above integral representation of $F_4$ is more complicated than the integral representations of $F_1, F_2$ and $F_3$.
Therefore, it is not straightforward to find an appropriate
finite field analogue of $F_4$ using the double integral representation \eqref{int-rep-f4}. In the spirit of $F_1^{\ast}$, $F_2^{\ast}$ and $F_3^{\ast}$, using Gauss sums we define
\begin{align}\label{f4-star}
& F_{4}(A;B;C,C^{\prime};x,y)^{*}\notag\\
& = \frac{1}{(q-1)^2} \sum_{\psi, \chi\in\widehat{\mathbb{F}_q^{\times}}}
 \frac{g(A\chi\psi)g(B\chi\psi)g(\overline{C\psi})g(\overline{C^{\prime}\chi})
 g(\overline{\psi})g(\overline{\chi})}{g(A)g(B)g(\overline{C})g(\overline{C^{\prime}})}\psi(x)\chi(y).
\end{align}
\par In this article we establish the function $F_4^{\ast}$ as a finite field analogue of the Appell series $F_4$ by proving results over finite fields analogous to classical results satisfied by $F_4$. 
For example, we prove the following result.
\begin{theorem}\label{MT1}
 Let $A, B, C, C'\in \widehat{\mathbb{F}_q^{\times}}$. For $x, y\in \mathbb{F}_q$ such that $x, y\neq 1$ we have
 \begin{align}
  &\overline{A}(1-x)\overline{B}(1-y)F_{4}\left(A;B;C,C^{\prime};\frac{-x}{(1-x)(1-y)},\frac{-y}{(1-x)(1-y)}\right)^{*}\notag\\
  &=\frac{1}{(q-1)^2}\sum_{\chi, \psi\in \widehat{\mathbb{F}_q^{\times}}}{_{2}}F_1\left(\begin{array}{cc}
                \overline{\chi}, & A\psi\\
                 & C'
              \end{array}\mid 1 \right)^{\ast}{_{2}}F_1\left(\begin{array}{cc}
                \overline{\psi}, & B\chi\\
                 & C
              \end{array}\mid 1 \right)^{\ast}\notag\\
              &\hspace{3cm}\times \frac{g(A\psi)g(B\chi)g(\overline{\chi})g(\overline{\psi})}{g(A)g(B)}\psi(-x)\chi(-y).\notag
 \end{align}
\end{theorem}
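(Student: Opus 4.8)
The plan is to start from the right-hand side and collapse it onto the definition \eqref{f4-star} of $F_4^\ast$. First I would insert McCarthy's definition \eqref{mcCarty's_defn} of ${}_2F_1^\ast$ into each of the two factors, introducing fresh summation characters $\lambda$ and $\mu$; since both hypergeometric functions are evaluated at $1$, the factors $\lambda(1)$ and $\mu(1)$ equal $1$ and drop out. Writing the first factor with $\lambda$ and the second with $\mu$, the outer weight $g(A\psi)g(B\chi)g(\overline{\chi})g(\overline{\psi})/(g(A)g(B))$ cancels exactly against the denominators $g(A\psi)$, $g(B\chi)$, $g(\overline{\chi})$, $g(\overline{\psi})$ produced by the two ${}_2F_1^\ast$'s. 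After this cancellation the right-hand side becomes a quadruple sum over $\chi,\psi,\lambda,\mu$ whose summand is a product of eight Gauss sums divided by $g(A)g(B)g(\overline{C})g(\overline{C'})$, times $\psi(-x)\chi(-y)$.

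The key step is to carry out the inner sums over $\chi$ and $\psi$ separately. The $\chi$-dependent part is $\frac{1}{q-1}\sum_\chi g(\lambda\overline{\chi})\,g(B\mu\chi)\,\chi(-y)$ and the $\psi$-dependent part is the analogous $\frac{1}{q-1}\sum_\psi g(A\lambda\psi)\,g(\mu\overline{\psi})\,\psi(-x)$. To evaluate the first, I would expand the two Gauss sums through their additive-character definition, interchange the order of summation, and apply orthogonality of multiplicative characters; the orthogonality relation pins down a single term and produces the closed form $g(B\lambda\mu)\,\lambda(-y)\,\overline{B\lambda\mu}(1-y)$. Equivalently one may write $g(\lambda\overline{\chi})g(B\mu\chi)=J(\lambda\overline{\chi},B\mu\chi)\,g(B\lambda\mu)$, expand the Jacobi sum, and sum over $\chi$; note that $\lambda\overline{\chi}\cdot B\mu\chi=B\lambda\mu$ is independent of $\chi$, which is exactly what makes the reduction work. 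The same computation with $A,x$ in place of $B,y$ gives $\frac{1}{q-1}\sum_\psi g(A\lambda\psi)g(\mu\overline{\psi})\psi(-x)=g(A\lambda\mu)\,\mu(-x)\,\overline{A\lambda\mu}(1-x)$. I expect this evaluation to be the main obstacle, both because it is the computational heart of the argument and because one must track the degenerate characters (where the relevant product becomes trivial) and the boundary values $x=0$ or $y=0$; using the additive-character form of the Gauss sum keeps these cases uniform and avoids the exceptional clauses in the Jacobi-sum identity.

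Substituting the two closed forms back leaves a double sum over $\lambda,\mu$ in which I would collect the character evaluations, using $\overline{\lambda}(1-y)\overline{B\mu}(1-y)=\overline{B\lambda\mu}(1-y)$ and $\overline{A\lambda}(1-x)\overline{\mu}(1-x)=\overline{A\lambda\mu}(1-x)$, so that the summand reads
\begin{align}
\frac{g(A\lambda\mu)g(B\lambda\mu)g(\overline{C\mu})g(\overline{C'\lambda})g(\overline{\lambda})g(\overline{\mu})}{g(A)g(B)g(\overline{C})g(\overline{C'})}\,\mu(-x)\lambda(-y)\,\overline{A\lambda\mu}(1-x)\,\overline{B\lambda\mu}(1-y).\notag
\end{align}
Finally I would expand the left-hand side directly from \eqref{f4-star}: by multiplicativity $\psi\!\left(\frac{-x}{(1-x)(1-y)}\right)=\psi(-x)\overline{\psi}(1-x)\overline{\psi}(1-y)$ and similarly for $\chi$, and absorbing the prefactor $\overline{A}(1-x)\overline{B}(1-y)$ turns the three factors at $1-x$ into $\overline{A\chi\psi}(1-x)$ and those at $1-y$ into $\overline{B\chi\psi}(1-y)$. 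Thus the left-hand summand becomes the product of Gauss sums from \eqref{f4-star} times $\psi(-x)\chi(-y)\,\overline{A\chi\psi}(1-x)\,\overline{B\chi\psi}(1-y)$. Comparing this with the displayed double sum under the relabelling $\chi\leftrightarrow\lambda$, $\psi\leftrightarrow\mu$ shows the two expressions agree term by term, which completes the proof.
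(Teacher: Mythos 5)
Your proof is correct, and it is essentially the paper's argument run in reverse: both proofs meet at the same quadruple Gauss-sum identity, compared term by term after relabelling. The paper starts from the left-hand side, expands $\overline{A\chi\psi}(1-x)$ and $\overline{B\chi\psi}(1-y)$ by Lemma \ref{g7}, and then recognizes the two ${}_2F_1^{\ast}(\cdot\mid 1)$ factors through four successive changes of variables ($\psi \mapsto \psi\overline{\eta}$, $\eta \mapsto \psi\overline{\eta}$, $\chi \mapsto \chi\overline{\lambda}$, $\lambda \mapsto \chi\overline{\lambda}$) together with \eqref{mcCarty's_defn}. You instead start from the right-hand side, insert \eqref{mcCarty's_defn} (correctly noting the cancellation of $g(A\psi)g(B\chi)g(\overline{\chi})g(\overline{\psi})$ against the outer weight), and collapse the inner $\chi$- and $\psi$-sums in one stroke via the evaluation $\frac{1}{q-1}\sum_{\chi} g(P\chi)\,g(Q\overline{\chi})\,\chi(t)=Q(t)\,\overline{PQ}(1+t)\,g(PQ)$, valid for $t\neq 0,-1$; this is precisely the two-character generalization of the paper's Lemma \ref{g7} (which is the case $Q=\varepsilon$), and your orthogonality computation of it checks out: with $P=B\mu$, $Q=\lambda$, $t=-y$ and with $P=A\lambda$, $Q=\mu$, $t=-x$ it yields exactly your two closed forms, so the resulting double sum over $\lambda,\mu$ matches the expanded left-hand side term by term under $\chi\leftrightarrow\lambda$, $\psi\leftrightarrow\mu$. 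Your preference for the additive-character expansion over the factorization $g(P\chi)g(Q\overline{\chi})=J(P\chi,Q\overline{\chi})\,g(PQ)$ is well judged: by Lemma \ref{gj1} the Jacobi-sum route picks up a $(q-1)\delta(PQ)$ correction when $B\lambda\mu$ (resp.\ $A\lambda\mu$) is trivial, whereas the orthogonality computation is uniform because $g(\varepsilon)=-1$ is exactly what the closed form predicts; the hypothesis $x,y\neq 1$ gives $1+t\neq 0$, and if $x=0$ or $y=0$ both sides of the theorem vanish under the convention $\chi(0)=0$, as you note. What your route buys is lighter bookkeeping, since one closed-form lemma replaces two applications of Lemma \ref{g7} plus four substitutions; what the paper's direction buys is that it invokes only lemmas already stated in the text.
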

The above result is a finite field analogue of the following identity \cite{rainville} satisfied by the Appell series $F_4$:
\begin{align}
 &(1-x)^{-a}(1-y)^{-b}F_4\left(a; b; c, c'; \frac{-x}{(1-x)(1-y)}, \frac{-y}{(1-x)(1-y)}\right)\notag\\
 &=\sum_{n, k=0}^{\infty}F(-n, a+k; c'; 1) F(-k, b+n; c; 1)\frac{(a)_k(b)_n}{k!n!}x^ky^n.\notag
\end{align} 
\par We now state a result where the classical Appell series $F_4$ is expressed as a product of two ${_2}F_1$-classical hypergeometric series \cite[Theorem 84, p. 269]{rainville}. 
If neither $c$ nor $(1-c+a+b)$ is zero or a negative integer, then
\begin{align}\label{identity-f4-1}
 &F_4\left(a; b; c, 1-c+a+b; \frac{-x}{(1-x)(1-y)}, \frac{-y}{(1-x)(1-y)}\right)\notag\\
 &=F\left(a, b; c; \frac{-x}{1-x}\right)F\left(a, b; 1-c+a+b; \frac{-y}{1-y}\right).
\end{align}
We prove the following result which is a finite field analogue of \eqref{identity-f4-1}. Let $\delta$ be defined on $\mathbb{F}_q$ by $\delta(0)=1$ and $\delta(x)=0$ for $x\neq 0$.
\begin{theorem}\label{MT2}
Let $A, B, C \in \widehat{\mathbb{F}_q^{\times}}$ be such that $A, B, C\neq \varepsilon, $ and $B\neq C$. 
For $x, y\in \mathbb{F}_q$ such that $x, y\neq 1$  we have
 \begin{align}
  &F_{4}\left(A;B;C,AB\overline{C};\frac{-x}{(1-x)(1-y)},\frac{-y}{(1-x)(1-y)}\right)^{*}\notag\\
  &={_{2}}F_1\left(\begin{array}{cc}
                A, & B\\
                 & C
              \end{array}\mid -\frac{x}{1-x} \right)^{\ast}{_{2}}F_1\left(\begin{array}{cc}
                A, & B\\
                 & AB\overline{C}
              \end{array}\mid -\frac{y}{1-y} \right)^{\ast}\notag\\
              & + \frac{q-1}{q}\overline{A}\left(\frac{x}{x-1}\right)\overline{B}\left(\frac{y}{y-1}\right)
              -\frac{q^2AC(-1)\overline{B}C(y)A(1-x)B(1-y)}{g(A)g(B)g(\overline{C})g(\overline{AB}C)}\delta(1-xy).\notag
 \end{align}
\end{theorem}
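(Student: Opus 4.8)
The plan is to deduce Theorem~\ref{MT2} from Theorem~\ref{MT1} by specializing $C'=AB\overline{C}$, which is exactly the character analogue of the classical relation $c'=1-c+a+b$ under which \eqref{identity-f4-1} holds. With this choice the left-hand side of Theorem~\ref{MT1} becomes $\overline{A}(1-x)\overline{B}(1-y)$ times the quantity $F_{4}(A;B;C,AB\overline{C};\ldots)^{\ast}$ appearing in Theorem~\ref{MT2}, so it suffices to evaluate
\[
\begin{aligned}
\frac{1}{(q-1)^2}\sum_{\chi,\psi}&\ {}_{2}F_1\!\left(\begin{array}{cc}\overline{\chi}, & A\psi\\ & AB\overline{C}\end{array}\mid 1\right)^{\ast}{}_{2}F_1\!\left(\begin{array}{cc}\overline{\psi}, & B\chi\\ & C\end{array}\mid 1\right)^{\ast}\\
&\times\frac{g(A\psi)g(B\chi)g(\overline{\chi})g(\overline{\psi})}{g(A)g(B)}\psi(-x)\chi(-y)
\end{aligned}
\]
and show that, after dividing by $\overline{A}(1-x)\overline{B}(1-y)$, it reduces to the right-hand side of Theorem~\ref{MT2}. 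The first tool I would use is a finite field analogue of Gauss's summation theorem, evaluating ${}_{2}F_1(S,T;U;1)^{\ast}$ as the ratio $\dfrac{g(U)\,g(U\overline{S}\,\overline{T})}{g(U\overline{S})\,g(U\overline{T})}$ together with explicit lower-order corrections; this plays the role of the Chu--Vandermonde evaluation used in the classical derivation of \eqref{identity-f4-1} from the double-sum identity.

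Applying this evaluation to both inner factors, the two ``cross'' Gauss sums it produces, $g(B\overline{C}\chi\overline{\psi})$ and $g(C\overline{B}\psi\overline{\chi})$, are inverses of one another, so by $g(\theta)g(\overline{\theta})=\theta(-1)q$ they collapse to $q\,(B\overline{C}\chi\overline{\psi})(-1)$. After this collapse every surviving factor splits as a product of a function of $\psi$ alone and a function of $\chi$ alone, and the double sum factors as $\Sigma_\psi\cdot\Sigma_\chi$. Rewriting each reciprocal Gauss sum via $1/g(\theta)=g(\overline{\theta})/(\theta(-1)q)$, one recognizes
\[
\Sigma_\psi\ \propto\ {}_{2}F_1\!\left(\begin{array}{cc}A, & C\overline{B}\\ & C\end{array}\mid x\right)^{\ast},\qquad \Sigma_\chi\ \propto\ {}_{2}F_1\!\left(\begin{array}{cc}B, & B\overline{C}\\ & AB\overline{C}\end{array}\mid y\right)^{\ast},
\]
the proportionality constants being explicit products of Gauss sums and sign characters. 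A bookkeeping check, using $g(C)g(\overline{C})=C(-1)q$, $g(C\overline{B})g(B\overline{C})=(C\overline{B})(-1)q$ (where the hypothesis $B\neq C$ is needed so that $C\overline{B}\neq\varepsilon$) and $g(AB\overline{C})g(\overline{AB}C)=(AB\overline{C})(-1)q$, shows that all powers of $q$ and all sign characters cancel, so the product of main terms is \emph{exactly} ${}_{2}F_1(A,C\overline{B};C;x)^{\ast}\cdot{}_{2}F_1(B,B\overline{C};AB\overline{C};y)^{\ast}$.

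Next I would invoke the finite field Pfaff (Kummer) transformation on each factor: applied to the first numerator parameter it gives ${}_{2}F_1(A,C\overline{B};C;x)^{\ast}=\overline{A}(1-x)\,{}_{2}F_1(A,B;C;-x/(1-x))^{\ast}+\cdots$, and applied to the second numerator parameter ${}_{2}F_1(B,B\overline{C};AB\overline{C};y)^{\ast}=\overline{B}(1-y)\,{}_{2}F_1(A,B;AB\overline{C};-y/(1-y))^{\ast}+\cdots$, where the symmetry of McCarthy's function in its numerator parameters is used to put the parameters in the required order. The prefactors $\overline{A}(1-x)$ and $\overline{B}(1-y)$ cancel precisely the factor $\overline{A}(1-x)\overline{B}(1-y)$ multiplying $F_4^{\ast}$ on the left, so the leading contribution is exactly the product of the two ${}_{2}F_1$'s at $-x/(1-x)$ and $-y/(1-y)$ demanded by Theorem~\ref{MT2}.

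The main obstacle is accounting for all of the correction terms, which is where the hypotheses $A,B,C\neq\varepsilon$ enter and where the two extra summands of Theorem~\ref{MT2} are produced. Corrections arise from three sources: the lower-order terms of the finite field Gauss theorem, the lower-order terms of the two Pfaff transformations, and, most delicately, the exceptional characters in the factorization step, where the pairing $g(\theta)g(\overline{\theta})=\theta(-1)q$ and the inversion $1/g(\theta)=g(\overline{\theta})/(\theta(-1)q)$ break down because $\theta=\varepsilon$ and $g(\varepsilon)=-1$. Isolating these degenerate contributions, the single-variable boundary terms should combine into $\frac{q-1}{q}\overline{A}\!\big(\tfrac{x}{x-1}\big)\overline{B}\!\big(\tfrac{y}{y-1}\big)$, while the simultaneous degeneration of both the $\psi$- and $\chi$-sums — which forces a relation between $x$ and $y$, hence the factor $\delta(1-xy)$, and leaves four Gauss sums uncancelled in a denominator together with a factor $q^2$ — should produce $-\frac{q^2AC(-1)\overline{B}C(y)A(1-x)B(1-y)}{g(A)g(B)g(\overline{C})g(\overline{AB}C)}\delta(1-xy)$. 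Carrying out this final reconciliation, keeping every sign character and every power of $q$ consistent throughout, is the technical heart of the argument.
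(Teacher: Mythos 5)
Your proposal is correct and follows essentially the same route as the paper's proof: specialize Theorem~\ref{MT1} at $C'=AB\overline{C}$, evaluate both inner ${_2}F_1(\,\cdot\mid 1)^{\ast}$ factors via the finite field Gauss evaluation (Lemma~\ref{g3}), collapse the inverse-pair cross Gauss sums with Lemma~\ref{g1}, factor the double sum into ${_2}F_1(A,\overline{B}C;C;x)^{\ast}\,{_2}F_1(B,B\overline{C};AB\overline{C};y)^{\ast}$, and finish with the Pfaff-type transformation (Lemma~\ref{rel-3}), the degenerate characters supplying the two extra summands. The correction-term reconciliation you defer is exactly what occupies the bulk of the paper's proof (the terms $\alpha_i$ and $\beta_i$ there) and resolves precisely as you predict; the only small inaccuracy is that under the hypotheses $A,B\neq\varepsilon$ and $B\neq C$ the Pfaff step is an exact identity, so it contributes no further corrections.
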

In addition, if $xy\neq 1$ and $A\neq C$, then we have 
\begin{align}
&F_{4}\left(A;B;C,AB\overline{C};\frac{-x}{(1-x)(1-y)},\frac{-y}{(1-x)(1-y)}\right)^{*}\notag\\
  &={_{2}}F_1\left(\begin{array}{cc}
                A, & B\\
                 & C
              \end{array}\mid -\frac{x}{1-x} \right)^{\ast}{_{2}}F_1\left(\begin{array}{cc}
                A, & B\\
                 & AB\overline{C}
              \end{array}\mid -\frac{y}{1-y} \right)^{\ast}.\notag 
\end{align}

\par We next consider the following identity from \cite{bailey-1} connecting the classical Appell series $F_4$ and $F_1$.
\begin{align}\label{identity-f4-2}
 &F_4\left(a; b; c, b; \frac{-x}{(1-x)(1-y)}, \frac{-y}{(1-x)(1-y)}\right)\notag\\
 &=(1-x)^{a}(1-y)^{a}F_1\left(a; c-b; 1+a-c; c; x, xy\right).
\end{align}
In the following theorem we give a finite field analogue of \eqref{identity-f4-2}.
\begin{theorem}\label{MT3}
Let $A, B, C \in \widehat{\mathbb{F}_q^{\times}}$ be such that $B \neq \varepsilon$ and $A\neq B\neq C\neq A$. For $x, y\in \mathbb{F}_q^{\times}$ such that $x,y\neq 1$ we have
 \begin{align}
  &F_{4}\left(A;B;C,B;\frac{-x}{(1-x)(1-y)},\frac{-y}{(1-x)(1-y)}\right)^{*}\notag\\
  &=A\left((1-x)(1-y)\right) F_{1}(A;\overline{B}C,A\overline{C};\overline{C};x,xy)^{*}-\frac{g(B)g(A\overline{B})}{q\cdot g(A)}\overline{B}(y)B\left((1-x)(1-y)\right).\notag
 \end{align}
\end{theorem}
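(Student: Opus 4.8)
The plan is to start from the definition \eqref{f4-star} of the left-hand side with $C'=B$ and to push the resulting double character sum into the shape of the $F_1^*$ double sum in \eqref{def1}. Writing $X=\frac{-x}{(1-x)(1-y)}$ and $Y=\frac{-y}{(1-x)(1-y)}$, the first observation is the multiplicativity $\psi(X)\chi(Y)=\psi(-x)\chi(-y)\,\overline{\psi\chi}\big((1-x)(1-y)\big)$, so that after substitution the summand carries a factor $\overline{\psi\chi}\big((1-x)(1-y)\big)$. The second observation is that the specialization $C'=B$ fuses the two $B$-dependent Gauss factors of $F_4^*$ into the single block $\frac{g(B\chi\psi)\,g(\overline{B}\,\overline{\chi})}{g(B)g(\overline{B})}$, whose numerator factors multiply to the character $\psi$. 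This is exactly the structural collapse that degenerates $F_4^*$ toward $F_1^*$, mirroring the classical reduction $(b)_{m+n}/(b)_n=(b+n)_m$ that drives \eqref{identity-f4-2}.

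First I would rewrite the $B$-block as a Jacobi sum. Using $g(\alpha)g(\beta)=J(\alpha,\beta)g(\alpha\beta)$ (valid for $\alpha,\beta,\alpha\beta\neq\varepsilon$) together with $g(B)g(\overline{B})=B(-1)q$, one gets $\frac{g(B\chi\psi)g(\overline{B}\,\overline{\chi})}{g(B)g(\overline{B})}=\frac{B(-1)}{q}\,J(B\chi\psi,\overline{B}\,\overline{\chi})\,g(\psi)$, and expanding $J(B\chi\psi,\overline{B}\,\overline{\chi})=\sum_{t}\psi(t)\,(B\chi)\!\left(\frac{t}{1-t}\right)$ introduces an auxiliary variable $t$ that decouples the $\psi$- and $\chi$-dependence; the surviving $g(\psi)g(\overline{\psi})=\psi(-1)q$ then cleans up the $\psi$-factors. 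Next I would invoke the Gauss-sum form of the binomial theorem (from the standard Gauss and Jacobi sum preliminaries of Section~2) to treat the factor $\overline{\psi\chi}\big((1-x)(1-y)\big)$, writing it, up to boundary contributions, as a character sum of the type $\frac{1}{q-1}\sum_{\alpha}\frac{g(\psi\chi\,\alpha)g(\overline{\alpha})}{g(\psi\chi)}\alpha(\cdot)$; it is the matching of this $\alpha$-sum against $A$ that pulls out the global factor $A\big((1-x)(1-y)\big)$.

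After these substitutions I would reindex and execute the inner summations by orthogonality. The sum over $t$ and over the auxiliary character collapse the product-type factor $g(A\chi\psi)$ and redistribute the remaining Gauss sums into the separated pattern $g(\overline{B}C\psi)$, $g(A\overline{C}\chi)$, $g(C\,\overline{\chi\psi})$ of \eqref{def1}, with arguments $\psi(x)$ and $\chi(xy)$. Comparing denominators, the leading contribution is precisely $A\big((1-x)(1-y)\big)\,F_{1}(A;\overline{B}C,A\overline{C};\overline{C};x,xy)^{*}$. The hypotheses $B\neq\varepsilon$, $A\neq B$, $B\neq C$, and $C\neq A$ enter here: they guarantee that $\overline{B}C$, $A\overline{C}$, and $A\overline{B}$ are nontrivial, so that every Jacobi--Gauss conversion above is legitimate and the target $F_1^*$ carries admissible parameters.

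The main obstacle will be the bookkeeping of the degenerate locus. Each use of $g(\alpha)g(\beta)=J(\alpha,\beta)g(\alpha\beta)$ and of the binomial expansion fails where one of the relevant characters is trivial, since there $g(\varepsilon)=-1$ replaces the generic value; isolating these exceptional terms is what manufactures the correction. I expect the single surviving leftover, after all cancellations, to assemble into $\frac{g(B)g(A\overline{B})}{q\,g(A)}\,\overline{B}(y)\,B\big((1-x)(1-y)\big)$ with the stated sign, the coefficient being the residue $J(B,A\overline{B})=g(B)g(A\overline{B})/g(A)$ of the $B$-block at the trivial character, normalized by the factor $q$ coming from the relation $g(\psi)g(\overline{\psi})=\psi(-1)q$. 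Checking that no further boundary or $\delta$-type term survives under the standing assumptions $x,y\neq 0$ and $x,y\neq 1$ will be the most delicate point of the argument.
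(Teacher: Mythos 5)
Your opening moves are sound and genuinely different from the paper's: splitting $\psi(X)\chi(Y)=\psi(-x)\chi(-y)\overline{\psi\chi}\bigl((1-x)(1-y)\bigr)$ and fusing the $C'=B$ block via Lemma \ref{gj1} into $J(B\chi\psi,\overline{B\chi})\,g(\psi)$ (valid for $\psi\neq\varepsilon$) is a correct finite-field mirror of $(b)_{m+n}/(b)_n=(b+n)_m$. The proof breaks down at the next two steps. First, the proposed single binomial expansion of $\overline{\psi\chi}\bigl((1-x)(1-y)\bigr)$: Lemma \ref{g7} applies to $\overline{\Lambda}(1-z)$ for a fixed $z$, so here it forces $z=x+y-xy$, and the resulting weight $\alpha(-(x+y-xy))$ does \emph{not} separate in $x$ and $y$; the subsequent ``execute the inner summations by orthogonality'' to reach the separated weights $\psi(x)\chi(xy)$ of \eqref{def1} therefore cannot be carried out, and your unspecified ``$\alpha(\cdot)$'' conceals exactly this obstruction. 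To decouple the variables you need \emph{two} expansions, one each for $\overline{\psi\chi}(1-x)$ and $\overline{\psi\chi}(1-y)$ — but that is precisely the content of Theorem \ref{MT1}, which is where the paper's proof actually starts, after which one still faces the double sum of ${_2}F_1^{\ast}(1)$-values evaluated by Lemma \ref{g3}; so no work is saved. Second, ``matching the $\alpha$-sum against $A$'' to pull out $A\bigl((1-x)(1-y)\bigr)$ is not an available mechanism: $A$ is a fixed parameter, not a summation variable, and \eqref{g5} only collapses a character sum at a fixed point. In the paper this factor has a completely different origin: the Euler-type transformation Lemma \ref{rel-4}, applied to the inner ${_2}F_1^{\ast}(\cdots\mid y)$, produces the prefactor $A\overline{B}(1-y)$, and the final multiplication of $L=\overline{A}(1-x)\overline{B}(1-y)F_4^{\ast}$ by $A(1-x)B(1-y)$ supplies the rest. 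Nothing in your sketch plays the role of that transformation.

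The second genuine gap is that the correction term is guessed rather than derived, and the guess misses part of its structure. In the paper's proof, each application of Lemma \ref{g1}, Lemma \ref{g7}, \eqref{rel-1}, \eqref{rel-2} and Lemma \ref{rel-5} spawns a degenerate contribution — about a dozen in all ($\alpha,\beta,\gamma,\alpha_1,\alpha_2,\alpha_3,I_1,\dots,I_4,\beta_1,\beta_2$) — and showing that they cancel in pairs except for the single survivor $\alpha_1$ occupies most of the argument. That survivor comes from the $\delta(B\chi)$ degeneration $\chi=\overline{B}$ and is resummed via Lemma \ref{g7} into $\frac{g(B)g(A\overline{B})}{q\,g(A)}\overline{B}(y)\,\overline{A}B(1-x)$: note that its $(1-x)$-dependence arises from a binomial resummation over $\psi$, which your ``residue at the trivial character'' heuristic (yielding only the constant $g(B)g(A\overline{B})/g(A)$) cannot produce. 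Since the identity's entire content is that exactly this term survives, deferring the bookkeeping (``I expect the single surviving leftover\ldots'') leaves the theorem unproved; as written, the proposal is a plausible opening plus a statement of the answer. (A side remark you could not have caught blind: the theorem as printed appears to carry a typo — the paper's own derivation, in agreement with the classical identity \eqref{identity-f4-2}, yields lower parameter $C$, i.e.\ $F_{1}(A;\overline{B}C,A\overline{C};C;x,xy)^{\ast}$, not $\overline{C}$; your ``separated pattern'' follows the misprinted statement.)
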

\section{Notations and Preliminaries} We first recall some definitions and results from \cite{greene}. 
Let $\delta$ denote the function on multiplicative characters defined by
$$\delta(A)=\left\{
              \begin{array}{ll}
                1, & \hbox{if $A$ is the trivial character;} \\
                0, & \hbox{otherwise.}
              \end{array}
            \right.
$$
We also denote by $\delta$ the function defined on $\mathbb{F}_q$ by 
$$\delta(x)=\left\{
              \begin{array}{ll}
                1, & \hbox{if $x=0$;} \\
                0, & \hbox{if $x\neq 0$.}
              \end{array}
            \right.
$$
For multiplicative characters $A$ and $B$ on $\mathbb{F}_q$, the binomial coefficient ${A \choose B}$ is defined by
\begin{align}\label{b1}
{A \choose B}:=\frac{B(-1)}{q}J(A,\overline{B})=\frac{B(-1)}{q}\sum_{x \in \mathbb{F}_q}A(x)\overline{B}(1-x),
\end{align}
where $J(A, B)$ denotes the usual Jacobi sum. Binomial coefficients of characters possess many interesting properties. For example, we have
\begin{align}\label{b3}
{A\choose B}=B(-1){B\overline{A}\choose B}.
\end{align}
The following are character sum analogues of the binomial theorem \cite{greene}.
For any $A, B\in\widehat{\mathbb{F}_q^{\times}}$ and $x\in\mathbb{F}_q$ we have
\begin{align}\label{binom1}
\overline{A}(1-x)=\delta(x)+\frac{q}{q-1}\sum_{\chi\in\widehat{\mathbb{F}_q^{\times}}}{A\chi \choose \chi}\chi(x),\\
\label{binom2}
\overline{B}(x)\overline{A}B(1-x)=\frac{q}{q-1}\sum_{\chi\in\widehat{\mathbb{F}_q^{\times}}}{A\chi\choose B\chi}\chi(x).
\end{align}
Multiplicative characters satisfy the following orthogonal relation.
For $x\in\mathbb{F}_q$, we have
\begin{align}\label{g5}
 \sum_{\chi\in \widehat{\mathbb{F}_q^{\times}}}\chi(x) = (q-1)\delta(1-x).
\end{align}
\par We next recall some properties of Gauss and Jacobi sums. For further details, see \cite{evans}. Let $\zeta_p$ be a fixed primitive $p$-th root of unity
in ${\mathbb{C}}$. The trace map $\text{tr}: \mathbb{F}_q \rightarrow \mathbb{F}_p$ is given by
\begin{align}
\text{tr}(\alpha)=\alpha + \alpha^p + \alpha^{p^2}+ \cdots + \alpha^{p^{r-1}}.\notag
\end{align}
Then the additive character
$\theta: \mathbb{F}_q \rightarrow \mathbb{C}$ is defined by
\begin{align}
\theta(\alpha)=\zeta_p^{\text{tr}(\alpha)}.\notag
\end{align}
For $\chi \in \widehat{\mathbb{F}_q^\times}$, the \emph{Gauss sum} is defined by
\begin{align}
g(\chi):=\sum\limits_{x\in \mathbb{F}_q}\chi(x)\theta(x).\notag
\end{align}
It is easy to show that $g(\varepsilon)=-1$. 
\begin{lemma}\emph{(\cite[Eq. 1.12]{greene}).}\label{g1}
For $\chi\in \widehat{\mathbb{F}_q^{\times}}$ we have
$$g(\chi)g(\overline{\chi})=q\cdot \chi(-1)-(q-1)\delta(\chi).$$
\end{lemma}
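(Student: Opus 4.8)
The plan is to prove the identity by a direct evaluation of the product $g(\chi)g(\overline{\chi})$ as a double character sum, collapsing the multiplicative part via a change of variables and then reducing the resulting inner sum to the orthogonality of the additive character $\theta$.

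First I would expand the product straight from the definition of the Gauss sum:
\[
g(\chi)g(\overline{\chi}) = \sum_{x, y \in \mathbb{F}_q^{\times}}\chi(x)\,\overline{\chi}(y)\,\theta(x + y),
\]
where the summation ranges over $\mathbb{F}_q^{\times}$ because $\chi(0) = \overline{\chi}(0) = 0$. For each fixed $y \in \mathbb{F}_q^{\times}$ I would substitute $x = yu$, so that $u$ runs over $\mathbb{F}_q^{\times}$ as $x$ does; since $\chi\overline{\chi} = \varepsilon$, the character factor collapses to $\chi(x)\overline{\chi}(y) = \chi(u)$, while the argument of $\theta$ becomes $x + y = y(u + 1)$. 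This rewrites the sum as
\[
g(\chi)g(\overline{\chi}) = \sum_{u \in \mathbb{F}_q^{\times}}\chi(u)\sum_{y \in \mathbb{F}_q^{\times}}\theta\bigl(y(u + 1)\bigr).
\]

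Next I would evaluate the inner sum over $y$ by a case split on whether $u + 1$ vanishes. When $u = -1$ the summand is $\theta(0) = 1$, so the inner sum equals $q - 1$. When $u \neq -1$, the map $y \mapsto y(u + 1)$ permutes $\mathbb{F}_q^{\times}$, so the inner sum equals $\sum_{z \in \mathbb{F}_q^{\times}}\theta(z) = -1$, using that $\sum_{z \in \mathbb{F}_q}\theta(z) = 0$. Isolating the $u = -1$ contribution then gives
\[
g(\chi)g(\overline{\chi}) = (q - 1)\chi(-1) - \sum_{\substack{u \in \mathbb{F}_q^{\times} \\ u \neq -1}}\chi(u) = q\,\chi(-1) - \sum_{u \in \mathbb{F}_q^{\times}}\chi(u).
\]

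Finally I would apply the orthogonality relation for multiplicative characters, which gives $\sum_{u \in \mathbb{F}_q^{\times}}\chi(u) = q - 1$ when $\chi = \varepsilon$ and $0$ otherwise, that is, $\sum_{u \in \mathbb{F}_q^{\times}}\chi(u) = (q - 1)\delta(\chi)$. Substituting this in yields $g(\chi)g(\overline{\chi}) = q\,\chi(-1) - (q - 1)\delta(\chi)$, as claimed; one checks consistency against $g(\varepsilon) = -1$, since both sides equal $1$ when $\chi = \varepsilon$. The computation is entirely routine, so there is no serious obstacle; the only point demanding care is the bookkeeping of the exceptional term $u = -1$, where the inner additive sum jumps from $-1$ to $q - 1$, together with the observation that the reindexing $x = yu$ is legitimate precisely because the summation has already been restricted to $x, y \neq 0$.
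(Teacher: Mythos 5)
Your proof is correct, and every step checks out: the reindexing $x=yu$ is valid on $\mathbb{F}_q^{\times}\times\mathbb{F}_q^{\times}$, the inner additive sum evaluates to $q-1$ at $u=-1$ and $-1$ otherwise, and the final appeal to multiplicative orthogonality gives exactly $q\,\chi(-1)-(q-1)\delta(\chi)$. The paper itself offers no proof — it cites the identity as Eq.\ 1.12 of Greene — and your argument is precisely the standard direct computation found in the cited literature, so there is nothing to flag beyond noting that it is self-contained where the paper is not.
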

The following lemma gives a relation between Gauss and Jacobi sums.
\begin{lemma}\emph{(\cite[Eq. 1.14]{greene}).}\label{gj1} For $A,B\in\widehat{\mathbb{F}_q^{\times}}$ we have
\begin{align}
J(A,B)=\frac{g(A)g(B)}{g(AB)}+(q-1)B(-1)\delta(AB).\notag
\end{align}
\end{lemma}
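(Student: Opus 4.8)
The plan is to evaluate the product of Gauss sums $g(A)g(B)$ directly and reorganize the resulting double additive sum so that a Jacobi sum emerges; the claimed formula then follows after isolating a single diagonal term. First I would expand from the definitions, writing
\begin{align}
g(A)g(B)=\sum_{s,t\in\mathbb{F}_q}A(s)B(t)\theta(s+t),\notag
\end{align}
where only $s,t\in\mathbb{F}_q^{\times}$ contribute because $A(0)=B(0)=0$. I would then partition the sum according to the value of $u:=s+t$, treating the diagonal $u=0$ separately from the off-diagonal terms $u\neq 0$.

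For each fixed $u\in\mathbb{F}_q^{\times}$ the map $x\mapsto(s,t)=(ux,u(1-x))$, with $x$ ranging over $\mathbb{F}_q$, is a bijection onto the set $\{(s,t):s+t=u\}$, and gives
\begin{align}
\sum_{s+t=u}A(s)B(t)=(AB)(u)\sum_{x\in\mathbb{F}_q}A(x)B(1-x)=(AB)(u)\,J(A,B).\notag
\end{align}
Summing against $\theta(u)$ over $u\in\mathbb{F}_q^{\times}$ reassembles $g(AB)$, since $(AB)(0)=0$, so the off-diagonal part contributes $J(A,B)\,g(AB)$. On the diagonal $u=0$ we have $t=-s$ and $\theta(0)=1$, so this part equals $B(-1)\sum_{s\in\mathbb{F}_q}(AB)(s)=(q-1)B(-1)\delta(AB)$ by the orthogonality relation for multiplicative characters together with the convention $\chi(0)=0$. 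Combining the two pieces yields the intermediate identity
\begin{align}
g(A)g(B)=J(A,B)\,g(AB)+(q-1)B(-1)\delta(AB).\notag
\end{align}

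Finally I would solve this identity for $J(A,B)$. When $AB\neq\varepsilon$ the correction term vanishes and division by $g(AB)$ gives $J(A,B)=g(A)g(B)/g(AB)$, as claimed. The one delicate point, which I expect to be the main thing to get right, is the degenerate case $AB=\varepsilon$: here $g(AB)=g(\varepsilon)=-1$, so $1/g(AB)=-1$, and rearranging the intermediate identity gives $J(A,B)=(q-1)B(-1)-g(A)g(B)$, which coincides with $g(A)g(B)/g(AB)+(q-1)B(-1)\delta(AB)$ precisely because of this sign. Hence the single closed formula holds uniformly in $A,B$. As a consistency check I would re-evaluate $J(A,\overline{A})$ directly, obtaining $q-2$ when $A=\varepsilon$ and $-A(-1)$ otherwise, and confirm agreement using Lemma \ref{g1} for the value of $g(A)g(\overline{A})$.
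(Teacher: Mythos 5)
Your proof is correct and complete. The paper offers no proof of this lemma at all --- it is quoted directly from Greene \cite[Eq. 1.14]{greene} --- so there is nothing internal to compare against; your derivation (expand $g(A)g(B)$ as a double sum, fibre over $u=s+t$, parametrize each fibre with $u\neq 0$ by $s=ux$ to extract $AB(u)J(A,B)$, and evaluate the diagonal $u=0$ by orthogonality with the convention $\chi(0)=0$) is the standard argument found in Greene and in \cite{evans}. You also handle the one genuinely delicate point correctly: when $AB=\varepsilon$ the intermediate identity $g(A)g(B)=J(A,B)g(AB)+(q-1)B(-1)\delta(AB)$ still inverts to the single closed formula because $g(\varepsilon)=-1$, and your consistency check $J(A,\overline{A})=q-2$ for $A=\varepsilon$ and $-A(-1)$ otherwise agrees with Lemma \ref{g1}.
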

The following result is due to McCarthy.
\begin{lemma}\emph{(\cite[Th. 2.2]{mccarthy3}).}\label{g2}
For $A,B,C,D\in\widehat{\mathbb{F}_q^{\times}}$ we have
 \begin{align}
  &\frac{1}{q-1}\sum_{\chi\in\widehat{\mathbb{F}_q^{\times}}}g(A\chi)g(B\chi)g(C\overline{\chi})g(D\overline{\chi})\notag\\
  &\hspace{1cm}= \frac{g(AC)g(AD)g(BC)g(BD)}{g(ABCD)} + q(q-1)AB(-1)\delta(ABCD).\notag
 \end{align}
\end{lemma}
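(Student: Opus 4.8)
The plan is to prove this identity by the standard route for finite-field analogues of Barnes' first lemma: expand the four Gauss sums from their definition, use orthogonality to remove the $\chi$-summation, and then reassemble the resulting constrained sum back into Gauss and Jacobi sums. First I would write each Gauss sum out, $g(A\chi)=\sum_{w}A(w)\chi(w)\theta(w)$ and similarly for the other three, so that
\[
\sum_{\chi}g(A\chi)g(B\chi)g(C\overline{\chi})g(D\overline{\chi})=\sum_{w,x,y,z\in\mathbb{F}_q^{\times}}A(w)B(x)C(y)D(z)\theta(w+x+y+z)\sum_{\chi}\chi\left(\frac{wx}{yz}\right).
\]
The inner character sum is evaluated by the orthogonality relation \eqref{g5}, which returns $(q-1)$ when $wx=yz$ and $0$ otherwise. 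Dividing by $q-1$ then reduces the left-hand side to the constrained sum $S=\sum_{wx=yz}A(w)B(x)C(y)D(z)\theta(w+x+y+z)$.

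Next I would parametrize the constraint by setting $y=ws$ and $z=x/s$ with $s\in\mathbb{F}_q^{\times}$ free, so that $S$ factorizes over $w$ and $x$:
\[
S=\sum_{s}C\overline{D}(s)\left(\sum_{w}AC(w)\theta(w(1+s))\right)\left(\sum_{x}BD(x)\theta(x(1+s^{-1}))\right).
\]
I would separate the term $s=-1$, where both arguments $1+s$ and $1+s^{-1}$ vanish, and for $s\neq-1$ evaluate each inner sum by the elementary identity $\sum_{w}\eta(w)\theta(\alpha w)=\overline{\eta}(\alpha)g(\eta)$, valid for $\alpha\neq 0$. Collecting the characters in $s$ collapses the $s$-summand to $BC(s)\overline{ABCD}(1+s)$, and after the substitution $s\mapsto-s$ this sum equals $BC(-1)\,J(BC,\overline{ABCD})$. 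Applying Lemma \ref{gj1} to this Jacobi sum and Lemma \ref{g1} to rewrite $1/g(\overline{AD})$ and $g(\overline{ABCD})$ in terms of $g(AD)$ and $g(ABCD)$ then produces the main term; all the accumulated sign factors combine into $BC(-1)AD(-1)ABCD(-1)=[(ABCD)(-1)]^2=1$, leaving exactly $g(AC)g(AD)g(BC)g(BD)/g(ABCD)$.

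The hard part will be the bookkeeping of the degenerate characters. The evaluations above silently assume that $AD$ and $ABCD$ are nontrivial, so that Lemma \ref{g1} gives $g(\eta)g(\overline{\eta})=q\eta(-1)$ with no $\delta$-term; moreover the Jacobi sum in Lemma \ref{gj1} and the separated $s=-1$ term both carry $\delta$-corrections, the latter equalling $(q-1)^2 AB(-1)\delta(AC)\delta(BD)$. I would first treat the generic case $ABCD\neq\varepsilon$, checking that every such correction cancels so that no residual term survives, and then handle the boundary case $ABCD=\varepsilon$, equivalently $D=\overline{ABC}$, by re-evaluating $S$ directly under this relation. Verifying that all the scattered $\delta$-contributions recombine precisely into the single term $q(q-1)AB(-1)\delta(ABCD)$ is the main technical obstacle, since several intermediate quantities (namely $AC$, $BD$, $BC$, $AD$, and $ABCD$) degenerate simultaneously exactly when $ABCD=\varepsilon$.
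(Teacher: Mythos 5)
The paper itself offers no proof of Lemma \ref{g2}: it is imported verbatim from McCarthy \cite[Th.~2.2]{mccarthy3}, so there is no internal argument to compare yours against, and your proposal must be judged on its own. On those terms it is correct, and the parts you carry out explicitly are all right: expanding the four Gauss sums and applying the orthogonality relation \eqref{g5} reduces the left side to $S=\sum_{wx=yz}A(w)B(x)C(y)D(z)\theta(w+x+y+z)$; the substitution $y=ws$, $z=x/s$ is a genuine bijection onto the constraint set; the identity $\sum_{w}\eta(w)\theta(\alpha w)=\overline{\eta}(\alpha)g(\eta)$ for $\alpha\neq 0$ holds also for $\eta=\varepsilon$ under the conventions $\chi(0)=0$ and $g(\varepsilon)=-1$, so the $s$-summand does collapse to $BC(s)\overline{ABCD}(1+s)$ and $S=BC(-1)g(AC)g(BD)J(BC,\overline{ABCD})+(q-1)^2AB(-1)\delta(AC)\delta(BD)$; and your sign count $BC(-1)AD(-1)ABCD(-1)=1$ is accurate. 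The deferred degenerate analysis, which you rightly flag as the real work, does close exactly as you predict. Within the generic case $ABCD\neq\varepsilon$ the one hidden sub-case is $AD=\varepsilon$: there Lemma \ref{gj1} gives $J(BC,\overline{BC})=-BC(-1)$, so $S=-g(AC)g(BD)$, which agrees with the right side since $g(AD)=g(\varepsilon)=-1$ and $g(ABCD)=g(BC)$ — note this is a recombination of the Jacobi-sum $\delta$-correction with the main term, not a cancellation, so your phrase ``every such correction cancels'' is slightly off in spirit though harmless in substance. When $ABCD=\varepsilon$, splitting according to whether $BC$ and $AC$ are trivial yields four sub-cases, each matching $-g(AC)g(\overline{AC})g(BC)g(\overline{BC})+q(q-1)AB(-1)$ after Lemma \ref{g1}; for instance in the fully degenerate case $A=B$, $C=D=\overline{A}$ one gets $(q-2)+(q-1)^2=q^2-q-1=-1+q(q-1)$, as required. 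So the skeleton is sound, the computations shown are correct, and the remaining verifications are routine and succeed; written out in full, this would be a complete and self-contained proof of the lemma the paper only cites.
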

We now prove two lemmas which will be used to prove our main results.
\begin{lemma}\label{g3}
 For $A,B, C\in\widehat{\mathbb{F}_q^{\times}}$ we have
 \begin{align}
 &{_{2}}F_1\left(\begin{array}{cc}
                A, & B\\
                 & C
              \end{array}\mid 1 \right)^{\ast} = \frac{g(A\overline{C})g(B\overline{C})}{g(\overline{C})g(AB\overline{C})}
              +\frac{q(q-1)AB(-1)}{g(A)g(B)g(\overline{C})}\delta(AB\overline{C}).\notag
 \end{align}
\end{lemma}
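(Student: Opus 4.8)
The plan is to begin from McCarthy's definition \eqref{mcCarty's_defn} specialized to the ${_2}F_1$ case (that is, $n=1$) and evaluated at $x=1$, and then to recognize the resulting character sum as a direct instance of Lemma \ref{g2}.

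First I would write out the left-hand side explicitly. Taking $A_0=A$, $A_1=B$, $B_1=C$ and $x=1$ in \eqref{mcCarty's_defn}, and using that $\chi(-1)^{2}=1$ and $\chi(1)=1$, the definition collapses to
\[
{_{2}}F_1\left(\begin{array}{cc}
A, & B\\
 & C
\end{array}\mid 1 \right)^{\ast}
=\frac{1}{(q-1)\,g(A)g(B)g(\overline{C})}\sum_{\chi\in\widehat{\mathbb{F}_q^{\times}}}g(A\chi)g(B\chi)g(\overline{C\chi})g(\overline{\chi}).
\]
The main step is to massage the inner sum into the shape handled by Lemma \ref{g2}. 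Writing $g(\overline{C\chi})=g(\overline{C}\,\overline{\chi})$ and $g(\overline{\chi})=g(\varepsilon\,\overline{\chi})$, the sum becomes $\sum_{\chi}g(A\chi)g(B\chi)g(\overline{C}\,\overline{\chi})g(\varepsilon\,\overline{\chi})$, which is precisely the sum appearing in Lemma \ref{g2} (up to the factor $\tfrac{1}{q-1}$) under the parameter choice $A\mapsto A$, $B\mapsto B$, $C\mapsto\overline{C}$, $D\mapsto\varepsilon$.

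Applying Lemma \ref{g2} with these parameters and simplifying via $g(A\varepsilon)=g(A)$, $g(B\varepsilon)=g(B)$, and $AB\overline{C}\varepsilon=AB\overline{C}$, the right-hand side becomes
\[
\frac{g(A\overline{C})g(A)g(B\overline{C})g(B)}{g(AB\overline{C})}+q(q-1)AB(-1)\,\delta(AB\overline{C}).
\]
Substituting this back, the factor $(q-1)$ from Lemma \ref{g2} cancels the $\tfrac{1}{q-1}$ in the prefactor, and the $g(A)g(B)$ in the first term cancels against the denominator, leaving exactly the two terms in the claimed identity.

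The whole argument is essentially a single specialization of Lemma \ref{g2} with one parameter taken to be the trivial character, so I do not expect any genuine obstacle. The only point requiring care is the bookkeeping of the $\delta(AB\overline{C})$ term: after cancellation its coefficient is $\tfrac{q(q-1)AB(-1)}{g(A)g(B)g(\overline{C})}$, and one must check that the surviving denominator is indeed $g(A)g(B)g(\overline{C})$ rather than involving $g(AB\overline{C})$, which is automatic since the Kronecker delta forces $AB\overline{C}=\varepsilon$ only inside that term.
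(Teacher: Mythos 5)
Your proposal is correct and is precisely the paper's proof: the authors state that Lemma \ref{g3} "follows directly by using \eqref{mcCarty's_defn} and Lemma \ref{g2}," which is exactly your specialization of McCarthy's definition at $x=1$ (where $\chi(-1)^{2}=1$) followed by Lemma \ref{g2} with $D=\varepsilon$. Your bookkeeping of the cancellations, including the coefficient of the $\delta(AB\overline{C})$ term, is accurate.
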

\begin{proof}
 The proof follows directly by using  \eqref{mcCarty's_defn} and Lemma \ref{g2}.
\end{proof}
\begin{lemma}\emph{\label{g7}}
 Let $A \in \widehat{\mathbb{F}_q^{\times}}$ and $x\in\mathbb{F}_q$. For $x\neq 0, 1$ we have
 \begin{align}
  \overline{A}(1-x) = \frac{1}{q-1}\sum_{\chi\in \widehat{\mathbb{F}_q^{\times}}}\frac{g(A\chi)g(\overline{\chi})}{g(A)}\chi(-x).\notag
 \end{align}
\begin{proof}
 From \eqref{b1} and \eqref{binom1} we have
\begin{align}
 \overline{A}(1-x)=\frac{q}{q-1}\sum_{\chi\in \widehat{\mathbb{F}_q^{\times}}}{A\chi \choose \chi}\chi(x)=\frac{1}{q-1}\sum_{\chi\in \widehat{\mathbb{F}_q^{\times}}}J(A\chi, \overline{\chi})~\chi(-x).\notag
\end{align}
Lemma \ref{gj1} yields
\begin{align}
 \overline{A}(1-x)&=\frac{1}{q-1}\sum_{\chi\in \widehat{\mathbb{F}_q^{\times}}}\frac{g(A\chi)g(\overline{\chi})}{g(A)}\chi(-x)+\delta(A)\sum_{\chi\in \widehat{\mathbb{F}_q^{\times}}}\chi(x)\notag\\
 &=\frac{1}{q-1}\sum_{\chi\in \widehat{\mathbb{F}_q^{\times}}}\frac{g(A\chi)g(\overline{\chi})}{g(A)}\chi(-x).\notag
\end{align}
Here, we use the fact that $\displaystyle\sum_{\chi\in \widehat{\mathbb{F}_q^{\times}}}\chi(x)=0$ when $x\neq 1$.
\end{proof}
\end{lemma}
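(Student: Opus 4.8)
The plan is to obtain this identity by starting from the character-sum form of the binomial theorem in \eqref{binom1} and then converting the resulting binomial coefficients into Gauss sums by way of the Jacobi-sum relation in Lemma \ref{gj1}. Since the hypothesis $x \neq 0$ forces $\delta(x) = 0$, the first step is to drop that term from \eqref{binom1} and write
\[
\overline{A}(1-x) = \frac{q}{q-1}\sum_{\chi\in\widehat{\mathbb{F}_q^{\times}}}{A\chi \choose \chi}\chi(x).
\]
Next I would rewrite each binomial coefficient through its definition \eqref{b1}, which gives ${A\chi \choose \chi} = \frac{\chi(-1)}{q}J(A\chi,\overline{\chi})$. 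Absorbing the factor $\chi(-1)$ into $\chi(x)$ replaces $\chi(-1)\chi(x)$ by $\chi(-x)$ and cancels the leading $q$, so that
\[
\overline{A}(1-x) = \frac{1}{q-1}\sum_{\chi\in\widehat{\mathbb{F}_q^{\times}}}J(A\chi,\overline{\chi})\,\chi(-x).
\]

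The third step is to apply Lemma \ref{gj1} to the pair $(A\chi,\overline{\chi})$. Since $A\chi\cdot\overline{\chi} = A$, this expresses each Jacobi sum as
\[
J(A\chi,\overline{\chi}) = \frac{g(A\chi)g(\overline{\chi})}{g(A)} + (q-1)\overline{\chi}(-1)\delta(A),
\]
and substituting this splits the sum into a main term, which is exactly the right-hand side of the claim, plus an error term coming from the $\delta(A)$ piece.

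The main (indeed the only) delicate point is verifying that this error term disappears. I would note that it is nonzero only when $A=\varepsilon$, and that in every case $\overline{\chi}(-1)=\chi(-1)$ because $\chi(-1)=\pm 1$; hence $\overline{\chi}(-1)\chi(-x)=\chi(-1)\chi(-x)=\chi(x)$. The error contribution therefore collapses to $\delta(A)\sum_{\chi}\chi(x)$, which by the orthogonality relation \eqref{g5} equals $(q-1)\delta(A)\delta(1-x)$. The standing hypothesis $x\neq 1$ makes $\delta(1-x)=0$, so this term vanishes and only the main term remains, giving precisely the stated formula. In short, the whole argument is a short and mechanical computation, and the one place demanding care is confirming that the $\delta$-term is killed exactly by the assumption $x\neq 1$.
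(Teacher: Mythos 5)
Your proof is correct and is essentially identical to the paper's: both start from \eqref{binom1} (dropping the $\delta(x)$ term since $x\neq 0$), convert the binomial coefficient to a Jacobi sum via \eqref{b1}, apply Lemma \ref{gj1} with $A\chi\cdot\overline{\chi}=A$, and kill the resulting $\delta(A)\sum_{\chi}\chi(x)$ term using $x\neq 1$. Your treatment is, if anything, slightly more careful than the paper's, since you explicitly justify replacing $\overline{\chi}(-1)\chi(-x)$ by $\chi(x)$ and flag where each hypothesis ($x\neq 0$, $x\neq 1$) is used.
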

We now find certain special values and transformation identities of the McCarthy's finite field hypergeometric function which will be used to prove our main results.
For this we need to use the relation between the finite field hypergeometric functions defined by Greene and McCarthy. Greene \cite{greene}
defined a finite field analogue of the hypergeometric series $F(a, b; c; x)$ using its integral representation. 
Let $A, B, C$ be multiplicative characters on $\mathbb{F}_q$. Then Greene's $_{2}F_1$-finite field
hypergeometric series is defined as
\begin{align}
{_{2}}F_1\left(\begin{array}{cc}
                A, & B\\
                 & C
              \end{array}\mid x \right)=\varepsilon(x)\frac{BC(-1)}{q}\sum_{y\in \mathbb{F}_q}B(y)\overline{B}C(1-y)\overline{A}(1-xy).\notag
\end{align}
Greene expressed the above ${_{2}}F_1$-finite field hypergeometric series in terms of binomial coefficients \cite[Theorem 3.6]{greene} as given below.
\begin{align}
{_{2}}F_1\left(\begin{array}{cc}
                A, & B\\
                 & C
              \end{array}\mid x \right)=\frac{q}{q-1}\sum_{\chi\in \widehat{\mathbb{F}_q^\times}}{A\chi \choose \chi}{B\chi \choose C\chi}\chi(x).\notag
\end{align}
In general, for positive integer $n$, Greene \cite{greene} defined the ${_{n+1}}F_n$- finite field hypergeometric series over $\mathbb{F}_q$ by
\begin{align}
{_{n+1}}F_n\left(\begin{array}{cccc}
                A_0, & A_1, & \ldots, & A_n\\
                 & B_1, & \ldots, & B_n
              \end{array}\mid x \right)
              =\frac{q}{q-1}\sum_{\chi\in \widehat{\mathbb{F}_q^\times}}{A_0\chi \choose \chi}{A_1\chi \choose B_1\chi}
              \cdots {A_n\chi \choose B_n\chi}\chi(x),\notag
\end{align}
where $A_0, A_1,\ldots, A_n$ and $B_1, B_2,\ldots, B_n$ are multiplicative characters on $\mathbb{F}_q$.
\par The following proposition relates the two finite field hypergeometric functions defined by Greene in \cite{greene} and McCarthy in \cite{mccarthy3} 
under certain conditions on the parameters.
\begin{proposition}\cite[Proposition 2.5]{mccarthy3}\label{prop-300}
If $A_0\neq\varepsilon$ and $A_i\neq B_i$ for $1\leq i\leq n$, then for $x\in \mathbb{F}_q$ we have
\begin{align}
&{_{n+1}F}_n\left(\begin{array}{cccc}
                A_0, & A_1, & \ldots, & A_n\\
                 & B_1, & \ldots, & B_n
              \end{array}\mid x \right)^{\ast}\notag\\
              &=\left[\prod_{i=1}^n{A_i \choose B_i}^{-1}\right]{_{n+1}F}_n\left(\begin{array}{cccc}
                A_0, & A_1, & \ldots, & A_n\\
                 & B_1, & \ldots, & B_n
              \end{array}\mid x \right).\notag
              \end{align}
\end{proposition}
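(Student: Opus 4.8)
The plan is to prove Proposition~\ref{prop-300} by expanding both sides from their definitions and comparing the summands term by term over $\chi\in\widehat{\mathbb{F}_q^{\times}}$. First I would write Greene's function in its binomial-coefficient form
\[
{_{n+1}}F_n\left(\begin{array}{cccc}
A_0, & A_1, & \ldots, & A_n\\
 & B_1, & \ldots, & B_n
\end{array}\mid x \right)=\frac{q}{q-1}\sum_{\chi}{A_0\chi \choose \chi}\prod_{i=1}^n{A_i\chi \choose B_i\chi}\chi(x),
\]
and keep McCarthy's function in the Gauss-sum form \eqref{mcCarty's_defn}. Since both are single sums over $\chi$ with the same character $\chi(x)$, the whole problem reduces to identifying the constant by which the two $\chi$-summands differ.

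The key step is to rewrite each binomial coefficient in Greene's sum as a ratio of Gauss sums. Combining \eqref{b1} with Lemma~\ref{gj1} gives
\[
{A \choose B}=\frac{B(-1)}{q}\left[\frac{g(A)g(\overline{B})}{g(A\overline{B})}+(q-1)\overline{B}(-1)\delta(A\overline{B})\right].
\]
Applied to ${A_0\chi \choose \chi}$ the argument of $\delta$ is $A_0\chi\overline{\chi}=A_0$, which is nontrivial by the hypothesis $A_0\neq\varepsilon$; applied to ${A_i\chi \choose B_i\chi}$ the argument is $A_i\chi\overline{B_i\chi}=A_i\overline{B_i}$, nontrivial by $A_i\neq B_i$. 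Hence the delta terms vanish for every $\chi$, and
\[
{A_0\chi \choose \chi}=\frac{\chi(-1)}{q}\frac{g(A_0\chi)g(\overline{\chi})}{g(A_0)},\qquad {A_i\chi \choose B_i\chi}=\frac{B_i\chi(-1)}{q}\frac{g(A_i\chi)g(\overline{B_i\chi})}{g(A_i\overline{B_i})}.
\]

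Substituting these into Greene's sum and collecting the powers of $q$, the sign characters (which combine to $\chi(-1)^{n+1}\prod_{i=1}^n B_i(-1)$), and the Gauss sums, Greene's function takes exactly the shape of the McCarthy sum \eqref{mcCarty's_defn} up to a $\chi$-independent constant. Matching that constant against the Gauss-sum expression for ${A_i \choose B_i}$ above (again valid since $A_i\neq B_i$ removes the delta term) identifies it precisely as $\prod_{i=1}^n{A_i \choose B_i}^{-1}$, which is the asserted factor. I expect the only genuine obstacle to be the bookkeeping of constants: tracking the $q$-powers and, in particular, checking that the factor $\prod_i B_i(-1)$ produced on Greene's side agrees with the sign hidden inside $\prod_i{A_i\choose B_i}^{-1}$, where one uses that each $B_i(-1)=\pm1$ so that $\prod_i B_i(-1)=\prod_i B_i(-1)^{-1}$. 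Once the hypotheses are invoked to kill the delta terms, everything else is a direct term-by-term comparison.
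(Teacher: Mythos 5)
Your proof is correct, and it is essentially McCarthy's original argument for \cite[Proposition 2.5]{mccarthy3}, which this paper cites without reproducing: expand Greene's sum in binomial coefficients, convert each ${A_0\chi\choose\chi}$ and ${A_i\chi\choose B_i\chi}$ to Gauss sums via \eqref{b1} and Lemma \ref{gj1}, use $A_0\neq\varepsilon$ and $A_i\neq B_i$ to kill the $\delta$-terms uniformly in $\chi$, and match constants (noting ${A_i\choose B_i}\neq 0$ since Gauss sums never vanish, so the factor is indeed invertible). Nothing further is needed.
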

We will also need the following two special cases which are not included in the above result. 
The proofs of these cases are easy and follow directly by using \eqref{b1}, \eqref{binom1}, \eqref{binom2}, Lemma \ref{g1}, Lemma \ref{gj1}
and the fact that $g(\varepsilon)=-1$.\\
Case 1: Let $A_0=\varepsilon$ and $A_1\neq B_1$. Then for $x\neq 0$ we have
\begin{align}\label{rel-1}
{_2F_1}\left(\begin{array}{cc}
               \varepsilon, & A_1 \\
               ~ & B_1
             \end{array}\mid x
\right)^{\ast}=1-\overline{B_1}(x){A_1\choose B_1}^{-1}\overline{A_1}B_1(1-x).
\end{align}
Case 2: Let $A_0\neq\varepsilon$, $A_1=B_1\neq\varepsilon$. Then for $x\neq 0$ we have
\begin{align}\label{rel-2}
{_2F_1}\left(\begin{array}{cc}
               A_0, & A_1 \\
               ~ & A_1
             \end{array}\mid x
\right)^{\ast}=-\overline{A_1}(x){A_0\overline{A_1}\choose \overline{A_1}}+\overline{A_0}(1-x).
\end{align}
\begin{lemma}\emph{\label{rel-3}}
 Let $A, B, C \in \widehat{\mathbb{F}_q^{\times}}$ be such that $A, B\neq \varepsilon$ and $ B\neq C$. For $x \in \mathbb{F}_q$ such that $x \neq 1$ we have
 \begin{align}
  {_{2}}F_1\left(\begin{array}{cc}
                A, & B\\
                 & C
              \end{array}\mid x \right)^{\ast}= \overline{A}(1-x){_{2}}F_1\left(\begin{array}{cc}
                                                                               A, & \overline{B}C\\
                                                                                & C
                                                                             \end{array}\mid -\frac{x}{1-x} \right)^{\ast}.\notag
 \end{align}
\end{lemma}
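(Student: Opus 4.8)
Lemma \ref{rel-3} is a finite-field analogue of the classical Pfaff transformation
$$F(a,b;c;x)=(1-x)^{-a}F\!\left(a,c-b;c;\tfrac{-x}{1-x}\right).$$
So my plan is to mimic the classical proof route but phrased entirely in the available character-sum language.
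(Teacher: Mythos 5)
Your one-line identification of the classical model is correct: the lemma is indeed the finite field analogue of Pfaff's transformation $F(a,b;c;x)=(1-x)^{-a}F(a,c-b;c;-x/(1-x))$, and the paper's proof is in exactly that spirit. But what you have written is a plan, not a proof: no step of the "character-sum" argument is actually carried out, and the substance of the lemma lies precisely in the steps you have omitted. The paper's proof has three concrete ingredients: (i) Proposition \ref{prop-300}, which converts the McCarthy function ${_2F_1}(\cdots)^{\ast}$ into Greene's function ${_2F_1}(\cdots)$ at the cost of the factor ${B\choose C}^{-1}$ (this already requires the hypotheses $A\neq\varepsilon$ and $B\neq C$); (ii) Greene's finite field Pfaff transformation \cite[Theorem 4.4 (ii)]{greene}, which supplies the identity at the level of Greene's functions, producing the factor $C(-1)\,\overline{A}(1-x)$ and the new upper parameter $\overline{B}C$; and (iii) the binomial identity \eqref{b3}, which shows ${B\choose C}^{-1}C(-1)={\overline{B}C\choose C}^{-1}$, so that Proposition \ref{prop-300} can be applied again in reverse to restore the star normalization. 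None of this bookkeeping is automatic, and it is exactly where the stated hypotheses earn their keep: the reverse application of Proposition \ref{prop-300} needs $\overline{B}C\neq C$, i.e.\ $B\neq\varepsilon$, which explains a hypothesis your sketch gives no way to account for.

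If instead you intended to "mimic the classical proof" directly at the level of Gauss sums (expanding ${_2F_1}^{\ast}$ via \eqref{mcCarty's_defn} and using the character-sum binomial theorems \eqref{binom1}--\eqref{binom2} together with Lemma \ref{g1}), that route is possible in principle but requires careful tracking of the degenerate $\delta$-terms that arise whenever a character combination trivializes; the clean identity of the lemma holds only because the hypotheses $A,B\neq\varepsilon$, $B\neq C$, $x\neq 1$ kill all such correction terms, and a proof must exhibit this cancellation explicitly. As it stands, the proposal contains no argument to assess, so it cannot be counted as a proof of the lemma.
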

\begin{proof}
 By Proposition \ref{prop-300} we have
\begin{align}
 {_{2}}F_1\left(\begin{array}{cc}
                A, & B\\
                 & C
              \end{array}\mid x \right)^{\ast}&= {B \choose C}^{-1}{_{2}}F_1\left(\begin{array}{cc}
                                                                                      A, & B\\
                                                                                           & C
                                                                             \end{array}\mid x \right)\notag\\
&= {B \choose C}^{-1}C(-1)\overline{A}(1-x){_{2}}F_1\left(\begin{array}{cc}
                                                          A, & \overline{B}C\\
                                                           & C
                                                       \end{array}\mid -\frac{x}{1-x} \right).\notag \\
&=\overline{A}(1-x){_{2}}F_1\left(\begin{array}{cc}
                                  A, & \overline{B}C\\
                                     & C
                              \end{array}\mid -\frac{x}{1-x} \right)^{\ast}.\notag                                       
\end{align}
The above equalities follow from \cite[Theorem 4.4 (ii)]{greene}, \eqref{b3} and Proposition \ref{prop-300}.
\end{proof}
\begin{lemma}\emph{\label{rel-4}}
Let $A, B, C \in \widehat{\mathbb{F}_q^{\times}}$ be such that $A, B\neq \varepsilon$ and $ B\neq C$, $A\neq C$. For $x \in \mathbb{F}_q$ such that $x \neq 1$ we have
\begin{align}
  {_{2}}F_1\left(\begin{array}{cc}
                A, & B\\
                 & C
              \end{array}\mid x \right)^{\ast}= \overline{AB}C(1-x){_{2}}F_1\left(\begin{array}{cc}
                                                                               C\overline{A}, & \overline{B}C\\
                                                                                & C
                                                                             \end{array}\mid x \right)^{\ast}.\notag
 \end{align} 
\end{lemma}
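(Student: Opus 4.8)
The plan is to derive this Euler-type transformation by composing the Pfaff-type transformation of Lemma \ref{rel-3} with itself, mirroring the classical fact that Euler's transformation $F(a,b;c;x)=(1-x)^{c-a-b}F(c-a,c-b;c;x)$ is the composition of two Pfaff transformations. Throughout I would use that McCarthy's ${_2}F_1$ is symmetric in its two numerator parameters, which is immediate from \eqref{mcCarty's_defn} since the relevant product over the indices is symmetric.

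First I would apply Lemma \ref{rel-3} to the left-hand side with parameters $(A,B;C)$; its hypotheses $A,B\neq\varepsilon$ and $B\neq C$ are among our assumptions, giving
\begin{align}
{_{2}}F_1\left(\begin{array}{cc} A, & B\\ & C \end{array}\mid x \right)^{\ast}
= \overline{A}(1-x)\,{_{2}}F_1\left(\begin{array}{cc} A, & \overline{B}C\\ & C \end{array}\mid u \right)^{\ast},\notag
\end{align}
where $u=-x/(1-x)$. Next I would rewrite the inner function as ${_{2}}F_1(\overline{B}C, A;C\mid u)^{\ast}$ using symmetry and apply Lemma \ref{rel-3} a second time, now with the numerator roles played by $\overline{B}C$ and $A$. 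Here the hypotheses of Lemma \ref{rel-3} demand $\overline{B}C\neq\varepsilon$, $A\neq\varepsilon$, and $A\neq C$; the first reads $B\neq C$ and the last is precisely the extra assumption $A\neq C$ of the present lemma. This step produces a prefactor $\overline{\overline{B}C}(1-u)=B\overline{C}(1-u)$ and new argument $-u/(1-u)$, together with the numerator change $A\mapsto \overline{A}C=C\overline{A}$.

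A short computation gives $1-u=1/(1-x)$ and $-u/(1-u)=x$, so that $B\overline{C}(1-u)=B\overline{C}(1/(1-x))=\overline{B}C(1-x)$, and the transformed function is ${_{2}}F_1(\overline{B}C, C\overline{A};C\mid x)^{\ast}={_{2}}F_1(C\overline{A}, \overline{B}C;C\mid x)^{\ast}$ after reordering. Combining the two invocations, the character prefactors multiply to
\begin{align}
\overline{A}(1-x)\cdot\overline{B}C(1-x)=\overline{AB}C(1-x),\notag
\end{align}
which yields exactly the claimed identity.

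The only genuinely delicate points — and thus the main obstacle — are checking that the second invocation of Lemma \ref{rel-3} is legitimate (this is where the hypothesis $A\neq C$ is forced, and one also notes $u\neq 1$ holds automatically since $-x/(1-x)=1$ is impossible) and correctly simplifying the nested fractional-linear argument $u$ together with the character of the reciprocal $1/(1-x)$. Everything else is routine bookkeeping of Gauss-sum-free identities already available in the excerpt.
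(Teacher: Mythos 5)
Your proof is correct, but it takes a genuinely different route from the paper. The paper proves Lemma \ref{rel-4} in one step: it converts the $^{\ast}$-function to Greene's function via Proposition \ref{prop-300}, invokes Greene's Euler-type transformation \cite[Theorem 4.4 (iv)]{greene} directly, and converts back using \eqref{b3} --- exactly parallel to how it proves Lemma \ref{rel-3} from \cite[Theorem 4.4 (ii)]{greene}. You instead compose Lemma \ref{rel-3} with itself, using the symmetry of McCarthy's function in its numerator parameters (which is immediate from \eqref{mcCarty's_defn}, and which the paper explicitly advertises as an advantage of this normalization), thereby reproducing over $\mathbb{F}_q$ the classical fact that Euler's transformation is Pfaff composed with Pfaff. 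All the delicate points in your argument check out: the second invocation of Lemma \ref{rel-3} applied to ${_{2}}F_1\left(\begin{smallmatrix} \overline{B}C, & A\\ & C \end{smallmatrix}\mid u \right)^{\ast}$ requires $\overline{B}C\neq\varepsilon$, $A\neq\varepsilon$, and $A\neq C$, which is precisely the hypothesis set of the lemma (a nice consistency check that the extra condition $A\neq C$ is forced); $u=-x/(1-x)$ can never equal $1$, so no new restriction on $x$ arises; and the simplifications $1-u=1/(1-x)$, $-u/(1-u)=x$, and $B\overline{C}\bigl(1/(1-x)\bigr)=\overline{B}C(1-x)$ are all correct, giving the total prefactor $\overline{A}(1-x)\cdot\overline{B}C(1-x)=\overline{AB}C(1-x)$ as claimed. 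What each approach buys: the paper's is shorter and leans on Greene's ready-made library of transformations, while yours stays entirely inside the $^{\ast}$-framework once Lemma \ref{rel-3} is available, making Lemma \ref{rel-4} a formal consequence of the Pfaff-type identity rather than an independent import from Greene's theory --- though note that since Lemma \ref{rel-3} itself rests on \cite[Theorem 4.4 (ii)]{greene}, the logical dependence on Greene is not eliminated, only localized.
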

\begin{proof}
By Proposition \ref{prop-300} we have
\begin{align}
{_{2}}F_1\left(\begin{array}{cc}
                A, & B\\
                 & C
              \end{array}\mid x \right)^{\ast}&= {B \choose C}^{-1}  {_{2}}F_1\left(\begin{array}{cc}
                                                                                      A, & B\\
                                                                                        & C
                                                                               \end{array}\mid x \right) \notag\\
 &={B \choose C}^{-1} C(-1) \overline{AB}C(1-x){_{2}}F_1\left(\begin{array}{cc}
                                                                C\overline{A}, & C\overline{B}\\
                                                                 & C
                                                            \end{array}\mid x \right) \notag   \\
&= \overline{AB}C(1-x){_{2}}F_1\left(\begin{array}{cc}
                                 C\overline{A}, & C\overline{B}\\
                                       & C
                                 \end{array}\mid x \right)^{\ast}. \notag                                                   
\end{align}
We note that the above equalities follow from \cite[Theorem 4.4 (iv)]{greene}, \eqref{b3} and Proposition \ref{prop-300}.
\end{proof}
\begin{lemma}\emph{\label{rel-5}}
Let $A, B \in \widehat{\mathbb{F}_q^{\times}}$ be such that $A\neq \varepsilon$ and $ A\neq B$. For $x \in \mathbb{F}_q$ such that $x \neq 1$ we have
\begin{align}
 {_2F_1}\left(\begin{array}{cc}
               A, & B \\
                & A
             \end{array}\mid x
\right)^{\ast}=\varepsilon(x)\overline{B}(1-x)-\frac{1}{q}{B\choose A}^{-1}\overline{A}(-x).\notag
\end{align}
\end{lemma}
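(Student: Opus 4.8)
The plan is to evaluate the left-hand side directly from McCarthy's definition \eqref{mcCarty's_defn} rather than routing through Proposition \ref{prop-300}; the latter would force the extra hypothesis $B\neq\varepsilon$, whereas Lemma \ref{rel-5} allows $B=\varepsilon$. Writing out the $n=1$ case of \eqref{mcCarty's_defn} for the present parameters (so $A_0=A$, $A_1=B$, $B_1=A$, and $\chi(-1)^2=1$) gives
\begin{align}
{_2F_1}\left(\begin{array}{cc} A, & B\\ & A\end{array}\mid x\right)^{\ast}=\frac{1}{(q-1)g(A)g(B)g(\overline{A})}\sum_{\chi\in\widehat{\mathbb{F}_q^{\times}}}g(A\chi)g(\overline{A\chi})\,g(B\chi)g(\overline{\chi})\,\chi(x).\notag
\end{align}
The key observation is that the factor $g(A\chi)g(\overline{A\chi})$ decouples the $A$-dependence from the summation variable: by Lemma \ref{g1} it equals $q\,A(-1)\chi(-1)-(q-1)\delta(A\chi)$. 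Substituting this splits the sum into a ``main'' term carrying $q A(-1)\chi(-1)$ and a single ``$\delta$'' term supported at $\chi=\overline{A}$.

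For the main term I would use $\chi(-1)\chi(x)=\chi(-x)$ together with Lemma \ref{g7}, which yields $\sum_{\chi}g(B\chi)g(\overline{\chi})\chi(-x)=(q-1)g(B)\overline{B}(1-x)$ valid for $x\neq 0,1$. After cancelling $g(B)$ the main term becomes $\dfrac{q A(-1)\overline{B}(1-x)}{g(A)g(\overline{A})}$, and since $A\neq\varepsilon$ Lemma \ref{g1} gives $g(A)g(\overline{A})=q A(-1)$, collapsing this to exactly $\overline{B}(1-x)$, which equals $\varepsilon(x)\overline{B}(1-x)$ on the range $x\neq 0$. This is precisely where the hypothesis $A\neq\varepsilon$ is consumed.

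Evaluating the $\delta$ term at $\chi=\overline{A}$ (so $\overline{\chi}=A$ and $B\chi=B\overline{A}$) produces $-\dfrac{g(B\overline{A})\,\overline{A}(x)}{g(B)g(\overline{A})}$. It then remains to identify this with $-\frac1q{B\choose A}^{-1}\overline{A}(-x)$; I would do this by inverting \eqref{b1} to write ${B\choose A}=\frac{A(-1)}{q}J(B,\overline{A})$ and then invoking Lemma \ref{gj1}, where the hypothesis $A\neq B$ forces $\delta(B\overline{A})=0$ so that $J(B,\overline{A})=g(B)g(\overline{A})/g(B\overline{A})$, combined with $\overline{A}(-x)=A(-1)\overline{A}(x)$ and $A(-1)^2=1$. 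Finally the boundary case $x=0$ is checked directly: every $\chi(x)$ vanishes, so the left-hand side is $0$, while on the right both $\varepsilon(x)$ and $\overline{A}(-x)$ vanish as $A\neq\varepsilon$, matching.

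The computation itself is short once the decoupling is in place; the main obstacle is purely the bookkeeping, namely keeping the Gauss-sum normalizations straight while converting the residual $\delta$-term into the binomial-coefficient form, and verifying that the two stated hypotheses are exactly what the two reductions demand ($A\neq\varepsilon$ to evaluate $g(A)g(\overline{A})$ in the main term, and $A\neq B$ to suppress $\delta(B\overline{A})$ in the $\delta$-term). No new identities beyond Lemmas \ref{g1}, \ref{gj1} and \ref{g7} and the binomial-coefficient definition \eqref{b1} are required.
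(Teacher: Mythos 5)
Your proof is correct, but it takes a genuinely different route from the paper's. The paper disposes of this lemma in one line: apply Proposition \ref{prop-300} with $A_0=A$, $A_1=B$, $B_1=A$ to pass to Greene's ${_2F_1}$, then quote Greene's evaluation [Corollary 3.16(iii)] and absorb the binomial normalization. You instead compute directly from McCarthy's definition \eqref{mcCarty's_defn}, decoupling via $g(A\chi)g(\overline{A\chi})=qA\chi(-1)-(q-1)\delta(A\chi)$ (Lemma \ref{g1}), evaluating the main term with Lemma \ref{g7} so that it collapses to $\overline{B}(1-x)$ precisely because $g(A)g(\overline{A})=qA(-1)$ when $A\neq\varepsilon$, and converting the residual $\chi=\overline{A}$ term $-g(B\overline{A})\overline{A}(x)/\bigl(g(B)g(\overline{A})\bigr)$ into $-\frac{1}{q}{B\choose A}^{-1}\overline{A}(-x)$ via \eqref{b1} and Lemma \ref{gj1}, where $A\neq B$ suppresses the $\delta(B\overline{A})$ contribution. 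I verified each of these steps, including the separate check at $x=0$, and the identity holds under exactly the stated hypotheses; your argument buys self-containedness (no appeal to Greene's Corollary 3.16, only to facts already proved or cited in this paper) at the cost of redoing a computation the paper outsources.

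One factual correction to your motivation: your claim that routing through Proposition \ref{prop-300} ``would force the extra hypothesis $B\neq\varepsilon$'' is a misreading. The proposition requires $A_0\neq\varepsilon$ and $A_i\neq B_i$ for $1\leq i\leq n$; instantiated here these conditions read $A\neq\varepsilon$ and $B\neq A$, which are exactly the hypotheses of Lemma \ref{rel-5} — no condition $B\neq\varepsilon$ arises, so the paper's route also covers $B=\varepsilon$. This does not affect the validity of your proof, only the stated reason for choosing its route.
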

\begin{proof}
The proof follows directly by using Proposition \ref{prop-300} and \cite[Corollary 3.16 (iii)]{greene}.
\end{proof}

\section{Proofs of Theorem \ref{MT1} and Theorem \ref{MT2}}
We write $\displaystyle\sum_{\chi}$ to denote the sum over all multiplicative characters of $\mathbb{F}_q$.
\begin{proof}[Proof of Theorem \ref{MT1}]
The result holds trivially if either $x=0$ or $y=0$. Therefore, we assume that both $x$ and $y$ are nonzero. 
From \eqref{f4-star} and then using Lemma \ref{g7} we have
\begin{align}
L&:=\overline{A}(1-x)\overline{B}(1-y)F_{4}\left(A;B;C,C^{\prime};\frac{-x}{(1-x)(1-y)},\frac{-y}{(1-x)(1-y)}\right)^{*}\notag\\
&=\frac{1}{(q-1)^2}\sum_{\psi, \chi}\frac{g(A\chi\psi)g(B\chi\psi)g(\overline{C\psi})g(\overline{C^{\prime}\chi})
 g(\overline{\psi})g(\overline{\chi})}{g(A)g(B)g(\overline{C})g(\overline{C^{\prime}})}\psi(-x)\chi(-y)\notag\\
 &\hspace{3cm}\times\overline{A\chi\psi}(1-x)\overline{B\chi\psi}(1-y)\notag\\
 &=\frac{1}{(q-1)^4}\sum_{\psi, \chi, \eta, \lambda}\frac{g(A\chi\psi\eta)g(B\chi\psi\lambda)g(\overline{C\psi})g(\overline{C^{\prime}\chi})
 g(\overline{\psi})g(\overline{\chi})g(\overline{\eta})g(\overline{\lambda})}{g(A)g(B)g(\overline{C})g(\overline{C^{\prime}})}\psi\eta(-x)\chi\lambda(-y).\notag
\end{align}
The change of variables $\psi \mapsto \psi\overline{\eta}$ yields
\begin{align}
 L=\frac{1}{(q-1)^4}\sum_{\psi, \chi, \eta, \lambda}\frac{g(A\chi\psi)g(B\chi\psi\lambda\overline{\eta})g(\overline{C\psi}\eta)g(\overline{C^{\prime}\chi})
 g(\overline{\psi}\eta)g(\overline{\chi})g(\overline{\eta})g(\overline{\lambda})}{g(A)g(B)g(\overline{C})g(\overline{C^{\prime}})}\psi(-x)\chi\lambda(-y).\notag
\end{align}
Similarly, the change of variables $\eta \mapsto \psi\overline{\eta}$ and \eqref{mcCarty's_defn} yield
\begin{align}\label{mt1-q2}
L&=\frac{1}{(q-1)^3}\sum_{\psi, \chi, \lambda}\frac{g(A\chi\psi)g(B\chi\lambda)g(\overline{C^{\prime}\chi})
 g(\overline{\chi})g(\overline{\lambda})g(\overline{\psi})}{g(A)g(B)g(\overline{C^{\prime}})}\psi(-x)\chi\lambda(-y)\notag\\
 &\hspace{3cm}\times {_2F_1}\left(\begin{array}{cc}
               \overline{\psi}, & B\chi\lambda \\
                & C
             \end{array}\mid 1\right)^{\ast}.
\end{align}
If we apply the change of variables $\chi \mapsto \chi\overline{\lambda}$ then  \eqref{mt1-q2} reduces to
\begin{align}\label{mt1-q3}
L&=\frac{1}{(q-1)^3}\sum_{\psi, \chi, \lambda}\frac{g(A\chi\overline{\lambda}\psi)g(B\chi)g(\overline{C^{\prime}\chi}\lambda)
 g(\overline{\chi}\lambda)g(\overline{\lambda})g(\overline{\psi})}{g(A)g(B)g(\overline{C^{\prime}})}\psi(-x)\chi(-y)\notag\\
 &\hspace{3cm}\times {_2F_1}\left(\begin{array}{cc}
               \overline{\psi}, & B\chi \\
                & C
             \end{array}\mid 1\right)^{\ast}.
\end{align}
Finally, if we apply the change of variables $\lambda \mapsto \chi\overline{\lambda}$ then  \eqref{mt1-q3} reduces to
\begin{align}
L&=\frac{1}{(q-1)^3}\sum_{\psi, \chi, \lambda}\frac{g(A\lambda\psi)g(B\chi)g(\overline{C^{\prime}\lambda})
 g(\overline{\chi}\lambda)g(\overline{\lambda})g(\overline{\psi})}{g(A)g(B)g(\overline{C^{\prime}})}\psi(-x)\chi(-y)\notag\\
 &\hspace{3cm}\times {_2F_1}\left(\begin{array}{cc}
               \overline{\psi}, & B\chi \\
                & C
             \end{array}\mid 1\right)^{\ast}\notag\\
&=\frac{1}{(q-1)^2}\sum_{\chi, \psi}{_{2}}F_1\left(\begin{array}{cc}
                \overline{\chi}, & A\psi\\
                 & C'
              \end{array}\mid 1 \right)^{\ast}{_{2}}F_1\left(\begin{array}{cc}
                \overline{\psi}, & B\chi\\
                 & C
              \end{array}\mid 1 \right)^{\ast}\notag\\
              &\hspace{3cm}\times \frac{g(A\psi)g(B\chi)g(\overline{\chi})g(\overline{\psi})}{g(A)g(B)}\psi(-x)\chi(-y). \notag
\end{align}
This completes the proof of the theorem.
\end{proof}
\begin{proof}[Proof of Theorem \ref{MT2}]
If $xy=0$, then the result is trivial. So, we take both $x$ and $y$ are nonzero. Now, from Theorem \ref{MT1} we have
\begin{align}
 L&:=\overline{A}(1-x)\overline{B}(1-y)F_{4}\left(A;B;C,AB\overline{C};\frac{-x}{(1-x)(1-y)},\frac{-y}{(1-x)(1-y)}\right)^{*} \notag \\
  &=\frac{1}{(q-1)^2}\sum_{\psi, \chi\in \widehat{\mathbb{F}_q^{\times}}}{_{2}}F_1\left(\begin{array}{cc}
                \overline{\chi}, & A\psi\\
                 & AB\overline{C}
              \end{array}\mid 1 \right)^{\ast}{_{2}}F_1\left(\begin{array}{cc}
                \overline{\psi}, & B\chi\\
                 & C
              \end{array}\mid 1 \right)^{\ast}\notag\\
              &\hspace{3cm}\times \frac{g(A\psi)g(B\chi)g(\overline{\chi})g(\overline{\psi})}{g(A)g(B)}\psi(-x)\chi(-y).\notag
  \end{align}
 Lemma \ref{g3} yields
\begin{align}
 L&=\frac{1}{(q-1)^2}\sum_{\psi, \chi}\left(\frac{g(\overline{AB}C\overline{\chi})g(\overline{B}C\psi)}{g(\overline{AB}C)g(C\psi\overline{B\chi})}
 +\frac{q(q-1)A\psi\chi(-1)\delta(C\psi\overline{B\chi})}{g(\overline{\chi})g(A\psi)g(\overline{AB}C)}\right) \notag \\
 &\times\left(\frac{g(\overline{C\psi})g(B\overline{C}\chi)}{g(\overline{C})g(B\chi\overline{C\psi})}
 +\frac{q(q-1)B\psi\chi(-1)\delta(B\chi\overline{C\psi})}{g(\overline{\psi})g(B\chi)g(\overline{C})}\right)
 \frac{g(A\psi)g(B\chi)g(\overline{\chi})g(\overline{\psi})\psi(-x)\chi(-y)}{g(A)g(B)}\notag \\
 \label{mt2-q1}
 &=\frac{1}{(q-1)^2}\sum_{\psi, \chi}\frac{g(\overline{AB}C\overline{\chi})g(\overline{B}C\psi)g(\overline{C\psi})g(B\overline{C}\chi)g(A\psi)g(B\chi)}
 {g(A)g(B)g(\overline{C})g(\overline{AB}C)g(C\psi\overline{B\chi})g(B\chi\overline{C\psi})}  \\
 &\hspace{3cm}\times g(\overline{\chi})g(\overline{\psi})\psi(-x)\chi(-y)+\alpha_1+\alpha_2+\alpha_3,\notag
\end{align}
where
\begin{align}
 \alpha_1 &= A(-1)\frac{q}{q-1}\sum_{\psi,\chi}\frac{g(\overline{C\psi})g(B\overline{C}\chi)g(B\chi)g(\overline{\psi})}
 {g(A)g(B)g(\overline{C})g(\overline{AB}C)g(B\overline{C\psi}\chi)}\chi(y)\psi(x)\delta(\overline{B}C\overline{\chi}\psi),\notag\\
 \alpha_2&= B(-1)\frac{q}{q-1}\sum_{\psi,\chi}\frac{g(\overline{AB}C\overline{\chi})g(\overline{B}C\psi)g(A\psi)g(\overline{\chi})}
 {g(A)g(B)g(\overline{AB}C)g(\overline{B}C\overline{\chi}\psi)g(\overline{C})}\chi(y)\psi(x)\delta(B\overline{C}\overline{\psi}\chi),\notag \\
 \alpha_3&=q^2AB(-1)\sum_{\psi,\chi}\frac{\psi(-x)\chi(-y))\delta(\overline{B}C\overline{\chi}\psi)\delta(B\overline{C}\overline{\psi}\chi)}
 {g(A)g(B)g(\overline{AB}C)g(\overline{C})}.\notag
\end{align}
The above terms are nonzero only when $\overline{\chi}\psi=B\overline{C}$. So, after putting $\overline{\chi}=B\overline{C\psi}$ and using the fact that $g(\varepsilon)=-1$,
we obtain
\begin{align}\label{mt2-q2}
 \alpha_1&= -A(-1)\frac{q}{q-1}\sum_{\psi}\frac{g(\overline{C\psi})g(C\psi)g(\overline{\psi})g(\psi)}
 {g(A)g(B)g(\overline{C})g(\overline{AB}C)}\overline{B}C(y)\psi(xy),\\
 \label{mt2-q3}
 \alpha_2&= -B(-1)\frac{q}{q-1}\sum_{\psi}\frac{g(\overline{A\psi})g(A\psi)g(B\overline{C\psi})g(\overline{B}C\psi)}
 {g(A)g(B)g(\overline{C})g(\overline{AB}C)}\overline{B}C(y)\psi(xy),\\
 \alpha_3&=\frac{q^2AB(-1)\overline{B}C(-y)}{g(A)g(B)g(\overline{AB}C)g(\overline{C})}\sum_{\psi}\psi(xy).\notag
\end{align}
In case of $\alpha_3$, \eqref{g5} yields
 \begin{align}\label{mt2-q4}
 \alpha_3=\frac{q^2(q-1)AC(-1)\overline{B}C(y)}{g(A)g(B)g(\overline{AB}C)g(\overline{C})}\delta(1-xy).
 \end{align}
Using Lemma \ref{g1} we have 
\begin{align}
 \alpha_1 = -\frac{A(-1)q}{q-1}\sum_{\psi}\frac{\left(qC\psi(-1)-(q-1)\delta(C\psi)\right)\left(q\psi(-1)-(q-1)\delta(\psi)\right)}
 {g(A)g(B)g(\overline{C})g(\overline{AB}C)}\overline{B}C(y)\psi(xy).\notag
\end{align}
Now, using  \eqref{g5} and the fact that $C\neq\varepsilon$ we have
\begin{align}\label{mt2-q13}
 \alpha_1 = -\frac{q^3AC(-1)\overline{B}C(y)\delta(1-xy)}{g(A)g(B)g(\overline{C})g(\overline{AB}C)} + \frac{q^2AC(-1)\overline{B}C(y)\overline{C}(xy)}{g(A)
 g(B)g(\overline{C})g(\overline{AB}C)} + \frac{q^2AC(-1)\overline{B}C(y)}{g(A)g(B)g(\overline{C})g(\overline{AB}C)}.
\end{align}
From \eqref{mt2-q1} we have
\begin{align}
 L&=\frac{1}{(q-1)^2}\sum_{\substack{\psi, \chi\\\overline{\chi}\psi\neq B\overline{C}}}\frac{g(\overline{AB}C\overline{\chi})g(\overline{B}C\psi)g(\overline{C\psi})
 g(B\overline{C}\chi)g(A\psi)g(B\chi)}{g(A)g(B)g(\overline{C})g(\overline{AB}C)g(C\psi\overline{B\chi})g(B\chi\overline{C\psi})}\notag \\
\hspace{2cm} &\times g(\overline{\chi})g(\overline{\psi})\psi(-x)\chi(-y)+\beta +\alpha_1+\alpha_2+\alpha_3,\notag
\end{align}
where
\begin{align}
\beta =\frac{BC(-1)}{(q-1)^2}\sum_{\psi}\frac{g(\overline{A\psi})g(A\psi)g(\overline{B}C\psi)g(B\overline{C\psi})g(\overline{C\psi})g(C\psi)g(\psi)
 g(\overline{\psi})}{g(A)g(B)g(\overline{C})g(\overline{AB}C)}\overline{B}C(y)\psi(xy).\notag
\end{align}
Using Lemma \ref{g1} on $g(C\psi\overline{B\chi})g(B\chi\overline{C\psi})$ we have
\begin{align}
 L&=\frac{BC(-1)}{q(q-1)^2}\sum_{\substack{\psi, \chi\\\overline{\chi}\psi\neq B\overline{C}}}\frac{g(\overline{AB}C\overline{\chi})g(\overline{B}C\psi)g(\overline{C\psi})
 g(B\overline{C}\chi)g(A\psi)g(B\chi)}{g(A)g(B)g(\overline{C})g(\overline{AB}C)} \notag\\
\hspace{2cm} &\times g(\overline{\chi})g(\overline{\psi})\psi(x)\chi(y)+\beta +\alpha_1+\alpha_2+\alpha_3\notag\\
\label{mt2-q8}
 &=\frac{BC(-1)}{q(q-1)^2}\sum_{\psi,\chi }\frac{g(\overline{AB}C\overline{\chi})g(\overline{B}C\psi)g(\overline{C\psi})g(B\overline{C}\chi)g(A\psi)g(B\chi)}
 {g(A)g(B)g(\overline{C})g(\overline{AB}C)}\\
 &\times g(\overline{\chi})g(\overline{\psi})\psi(x)\chi(y) + \frac{q-1}{q}\beta +\alpha_1+\alpha_2+\alpha_3.\notag
\end{align}
Employing Lemma \ref{g1} on $g(C\psi)g(\overline{C\psi})$ and $g(\psi)g(\overline{\psi})$ we have
\begin{align}\label{mt2-q9}
 \beta=\frac{q^2B(-1)}{(q-1)^2}\sum_{\psi}\frac{g(\overline{A\psi})g(A\psi)g(B\overline{C\psi})g(\overline{B}C\psi)}
 {g(A)g(B)g(\overline{C})g(\overline{AB}C)}\overline{B}C(y)\psi(xy) - \beta_1 -\beta_2 +\beta_3,
 \end{align}
where 
\begin{align}
 &\beta_1 = \frac{qBC(-1)}{q-1}\sum_{\psi}\frac{g(\overline{A\psi})g(A\psi)g(B\overline{C\psi})g(\overline{B}C\psi)\overline{B}C(y)\psi(xy)}
 {g(A)g(B)g(\overline{C})g(\overline{AB}C)}\psi(-1)\delta(C\psi), \notag \\ 
 &\beta_2 = \frac{qBC(-1)}{q-1}\sum_{\psi}\frac{g(\overline{A\psi})g(A\psi)g(B\overline{C\psi})g(\overline{B}C\psi)\overline{B}C(y)\psi(xy)}
 {g(A)g(B)g(\overline{C})g(\overline{AB}C)}C\psi(-1)\delta(\psi),\notag \\
 &\beta_3 = BC(-1)\sum_{\psi}\frac{g(\overline{A\psi})g(A\psi)g(B\overline{C\psi})g(\overline{B}C\psi)\overline{B}C(y)\psi(xy)}
 {g(A)g(B)g(\overline{C})g(\overline{AB}C)}\delta(C\psi)\delta(\psi).\notag 
\end{align}
Since $\beta_1$ is nonzero only when $\psi=\overline{C}$, after putting $\psi=\overline{C}$ we obtain
\begin{align}\label{mte2-q10}
\beta_1 &= \frac{qB(-1)g(\overline{B})g(B)g(A\overline{C})g(\overline{A}C)\overline{B}C(y)\overline{C}(xy)}{(q-1)g(A)g(B)g(\overline{C})g(\overline{AB}C)}. 
\end{align}
Using Lemma \ref{g1} and the fact that $B\neq\varepsilon$, we have
\begin{align}\label{mt2-q10}
\beta_1&= \frac{q^3 AC(-1)\overline{B}C(y)\overline{C}(xy)}{(q-1)g(A)g(B)g(\overline{C})g(\overline{AB}C)} - \overline{A}(-x)\overline{B}(-y). 
\end{align}
 Similarly $\beta_2$ is nonzero only when $\psi=\varepsilon$, and hence after putting $\psi=\varepsilon$ we obtain
\begin{align}
\beta_2 &= \frac{qB(-1)g(\overline{B}C)g(B\overline{C})g(A)g(\overline{A})\overline{B}C(y)}{(q-1)g(A)g(B)g(\overline{C})g(\overline{AB}C)} \notag \\
\label{mt2-q11}
&=\frac{q^3 AC(-1)\overline{B}C(y)}{(q-1)g(A)g(B)g(\overline{C})g(\overline{AB}C)}.
\end{align}
We note that the last equality is obtained using Lemma \ref{g1} and the fact that $A\neq\varepsilon$ and $B\neq C$. Using $C\neq\varepsilon$ we obtain $\beta_3=0.$
Putting  \eqref{mt2-q9} and \eqref{mt2-q3} into \eqref{mt2-q8} we obtain
\begin{align}
 L&=\frac{BC(-1)}{q(q-1)^2}\sum_{\psi,\chi }\frac{g(\overline{AB}C\overline{\chi})g(\overline{B}C\psi)g(\overline{C\psi})g(B\overline{C}\chi)g(A\psi)g(B\chi)}
 {g(A)g(B)g(\overline{C})g(\overline{AB}C)}\notag\\
 &\times g(\overline{\chi})g(\overline{\psi})\psi(x)\chi(y) - \frac{q-1}{q}\beta_1 - \frac{q-1}{q}\beta_2 +\alpha_1+\alpha_3.\notag
\end{align}
Multiplying both numerator and denominator by $g(B\overline{C})g(\overline{B}C)$ and rearranging the terms  we have
\begin{align}
 L&=\frac{BC(-1)}{q(q-1)^2}\sum_{\psi,\chi }\frac{g(A\psi)g(\overline{B}C\psi)g(\overline{C\psi})g(B\chi)g(B\overline{C}\chi)
 g(\overline{AB}C\overline{\chi})g(B\overline{C})g(\overline{B}C)}{g(A)g(\overline{B}C)g(\overline{C})g(B)g(B\overline{C})g(\overline{AB}C)}\notag \\
&\hspace{2cm} \times g(\overline{\chi})g(\overline{\psi})\psi(x)\chi(y) - \frac{q-1}{q}\beta_1 - \frac{q-1}{q}\beta_2 +\alpha_1+\alpha_3.\notag
\end{align}
Using Lemma \ref{g1} and the fact that $B\neq C$ we have
\begin{align}
 L&=\frac{1}{(q-1)^2}\sum_{\psi,\chi }\frac{g(A\psi)g(\overline{B}C\psi)g(\overline{C\psi})g(B\chi)g(B\overline{C}\chi)g(\overline{AB}C\overline{\chi})}
 {g(A)g(\overline{B}C)g(\overline{C})g(B)g(B\overline{C})g(\overline{AB}C)}\notag\\
&\hspace{2cm} \times g(\overline{\chi})g(\overline{\psi})\psi(x)\chi(y) - \frac{q-1}{q}\beta_1 - \frac{q-1}{q}\beta_2 +\alpha_1+\alpha_3.\notag
\end{align}
Now, \eqref{mcCarty's_defn} yields
\begin{align}
 L={_{2}}F_1\left(\begin{array}{cc}
                A, & \overline{B}C\\
                 & C
              \end{array}\mid x \right)^{\ast}{_{2}}F_1\left(\begin{array}{cc}
                B, & B\overline{C}\\
                 & AB\overline{C}
              \end{array}\mid y \right)^{\ast} - \frac{q-1}{q}\beta_1 - \frac{q-1}{q}\beta_2 +\alpha_1+\alpha_3.
\end{align}  
Using Lemma \ref{rel-3} we have
\begin{align}\label{mt2-q12}
 L&=\overline{A}(1-x)\overline{B}(1-y){_{2}}F_1\left(\begin{array}{cc}
                A, & B\\
                 & C
              \end{array}\mid \frac{-x}{1-x} \right)^{\ast}{_{2}}F_1\left(\begin{array}{cc}
                B, & A\\
                 & AB\overline{C}
              \end{array}\mid \frac{-y}{1-y} \right)^{\ast}  \\
  &\hspace{3cm} - \frac{q-1}{q}\beta_1 - \frac{q-1}{q}\beta_2 +\alpha_1+\alpha_3. \notag
\end{align}
Applying  \eqref{mt2-q4}, \eqref{mt2-q13}, \eqref{mt2-q10} and \eqref{mt2-q11} into \eqref{mt2-q12} we have
\begin{align*}
 L&=\overline{A}(1-x)\overline{B}(1-y){_{2}}F_1\left(\begin{array}{cc}
                A, & B\\
                 & C
              \end{array}\mid \frac{-x}{1-x} \right)^{\ast}{_{2}}F_1\left(\begin{array}{cc}
                B, & A\\
                 & AB\overline{C}
              \end{array}\mid \frac{-y}{1-y} \right)^{\ast}  \\
  &\hspace{3cm}  +\frac{q-1}{q}\overline{A}(-x)\overline{B}(-y)- \frac{q^2AC(-1)\overline{B}C(y)\delta(1-xy)}{g(A)g(B)g(\overline{C})g(\overline{AB}C)}. 
\end{align*}
Finally,  multiplying both sides by $A(1-x)B(1-y)$ we deduce the first identity of the theorem.
\par In addition, if we have $A\neq C$, then from \eqref{mte2-q10} we deduce that
\begin{align}
\beta_1 &= \frac{qB(-1)g(\overline{B})g(B)g(A\overline{C})g(\overline{A}C)\overline{B}C(y)\overline{C}(xy)}{(q-1)g(A)g(B)g(\overline{C})g(\overline{AB}C)}\notag\\
\label{mte2-q11}
&=\frac{q^3AC(-1)\overline{B}C(y)\overline{C}(xy)}{(q-1)g(A)g(B)g(\overline{C})g(\overline{AB}C)}.
\end{align}
We note that the last equality is obtained using Lemma \ref{g1} and the fact that $B\neq \varepsilon$ and $A\neq C$. 
Applying  \eqref{mt2-q4}, \eqref{mt2-q13}, \eqref{mt2-q11} and \eqref{mte2-q11} into \eqref{mt2-q12} we have
\begin{align*}
 L&=\overline{A}(1-x)\overline{B}(1-y){_{2}}F_1\left(\begin{array}{cc}
                A, & B\\
                 & C
              \end{array}\mid \frac{-x}{1-x} \right)^{\ast}{_{2}}F_1\left(\begin{array}{cc}
                B, & A\\
                 & AB\overline{C}
              \end{array}\mid \frac{-y}{1-y} \right)^{\ast}  \\
  &\hspace{3cm}  - \frac{q^2AC(-1)\overline{B}C(y)\delta(1-xy)}{g(A)g(B)g(\overline{C})g(\overline{AB}C)}. 
\end{align*}
Finally, multiplying both sides by $A(1-x)B(1-y)$ and using the fact that $xy\neq1$, we readily obtain the second identity of the theorem. This completes the proof.
\end{proof}
\section{Proof of Theorem \ref{MT3}}
\begin{proof}[Proof of Theorem \ref{MT3}]
From Theorem \ref{MT1} we have
\begin{align}
 L&:=\overline{A}(1-x)\overline{B}(1-y)F_{4}\left(A;B;C,B;\frac{-x}{(1-x)(1-y)},\frac{-y}{(1-x)(1-y)}\right)^{*} \notag \\
  &=\frac{1}{(q-1)^2}\sum_{\psi, \chi\in \widehat{\mathbb{F}_q^{\times}}}{_{2}}F_1\left(\begin{array}{cc}
                \overline{\chi}, & A\psi\\
                 & B
              \end{array}\mid 1 \right)^{\ast}{_{2}}F_1\left(\begin{array}{cc}
                \overline{\psi}, & B\chi\\
                 & C
              \end{array}\mid 1 \right)^{\ast}\notag\\
              &\hspace{3cm}\times \frac{g(A\psi)g(B\chi)g(\overline{\chi})g(\overline{\psi})}{g(A)g(B)}\psi(-x)\chi(-y).\notag
  \end{align}
Using Lemma \ref{g3} we have
\begin{align}
 L&=\frac{1}{(q-1)^2}\sum_{\psi, \chi}\left(\frac{g(\overline{B\chi})g(A\overline{B}\psi)}{g(\overline{B})g(A\psi\overline{B\chi})}
 +\frac{q(q-1)A\psi\chi(-1)\delta(A\psi\overline{B\chi})}{g(\overline{\chi})g(A\psi)g(\overline{B})}\right) \notag \\
 &\times\left(\frac{g(\overline{C\psi})g(B\overline{C}\chi)}{g(\overline{C})g(B\chi\overline{C\psi})}
 +\frac{q(q-1)B\psi\chi(-1)\delta(B\chi\overline{C\psi})}{g(\overline{\psi})g(B\chi)g(\overline{C})}\right)
 \frac{g(A\psi)g(B\chi)g(\overline{\chi})g(\overline{\psi})\psi(-x)\chi(-y)}{g(A)g(B)}\notag\\
 &=\frac{1}{(q-1)^2}\sum_{\psi, \chi }\frac{g(A\psi)g(A\overline{B}\psi)g(\overline{C\psi})g(B\chi)
 g(B\overline{C}\chi)g(\overline{B\chi})}{g(A)g(B)g(\overline{C})g(\overline{B})g(A\overline{B\chi}\psi)
 g(B\overline{C\psi}\chi)} \notag\\
 &\hspace{4cm}\times g(\overline{\chi})g(\overline{\psi})\psi(-x)\chi(-y) + \alpha + \beta + \gamma, \notag
\end{align}
where 
\begin{align}
\alpha&=\frac{qA(-1)}{q-1}\sum_{ \psi, \chi}\frac{g(\overline{C\psi})g(B\overline{C}\chi)g(B\chi)g(\overline{\psi})}{g(A)g(B)g(\overline{C})
g(\overline{B})g(B\chi\overline{C\psi})}\psi(x)\chi(y)\delta(A\overline{B\chi}\psi),\notag\\
\beta&=\frac{qB(-1)}{q-1}\sum_{ \psi,\chi}\frac{g(\overline{B\chi})g(A\overline{B}\psi)g(A\psi)g(\overline{\chi})}{g(A)g(B)g(\overline{C})
g(\overline{B})g(A\psi\overline{B\chi})}\psi(x)\chi(y)\delta(B\chi\overline{C\psi}),\notag\\
\gamma&=q^2AB(-1)\sum_{\psi, \chi}\frac{\psi(-x)\chi(-y)\delta(A\psi\overline{B\chi})\delta(B\chi\overline{C\psi})}{g(\overline{B})g(\overline{C})}.\notag
\end{align}
 The term $\alpha$ is nonzero only when $\overline{\chi}\psi=\overline{A}B$. After putting $\overline{\chi}=\overline{A\psi}B$  we obtain
\begin{align}
\alpha = \frac{qA(-1)}{q-1}\sum_{ \psi}\frac{g(\overline{C\psi})g(A\overline{C}\psi)g(A\psi)g(\overline{\psi})}{g(A)g(B)g(\overline{C})
g(\overline{B})g(A\overline{C})}A\overline{B}(y)\psi(xy).\notag
\end{align}
Using Lemma \ref{g1} and the fact that $B\neq\varepsilon$ we have
\begin{align}
\alpha=\frac{AB(-1)}{q-1}\sum_{ \psi}\frac{g(\overline{C\psi})g(A\overline{C}\psi)g(A\psi)g(\overline{\psi})}{g(A)g(\overline{C})g(A\overline{C})}A\overline{B}(y)\psi(xy).\notag
\end{align}
Now, multiplying both numerator and denominator by $g(\overline{A}C)$ and  then using  Lemma \ref{g1} and the fact that $A\neq C$ we have
\begin{align}\label{mt3-q2}
\alpha=\frac{BC(-1)g(\overline{A}C)}{q(q-1)}\sum_{ \psi}\frac{g(\overline{C\psi})g(A\overline{C}\psi)g(A\psi)g(\overline{\psi})}{g(A)g(\overline{C})}A\overline{B}(y)\psi(xy).
\end{align}
The term $\beta$ is nonzero only when $\overline{\psi}\chi=\overline{B}C$. After putting $\chi=\overline{B}C\psi$  we obtain
\begin{align}
\beta=\frac{qB(-1)}{q-1}\sum_{ \psi}\frac{g(\overline{C\psi})g(A\overline{B}\psi)g(A\psi)g(B\overline{C\psi})}{g(A)g(B)g(\overline{C})
g(\overline{B})g(A\overline{C})}\overline{B}C(y)\psi(xy).\notag
\end{align}
Using Lemma \ref{g1} and the fact that $B\neq\varepsilon$ we have
\begin{align}
\beta=\frac{1}{q-1}\sum_{ \psi}\frac{g(\overline{C\psi})g(A\overline{B}\psi)g(A\psi)g(B\overline{C\psi})}{g(A)g(\overline{C})
g(A\overline{C})}\overline{B}C(y)\psi(xy).\notag
\end{align} 
 Multiplying both numerator and denominator by $g(\overline{A}C)$ and then using  Lemma \ref{g1} and the fact that $A\neq C$ we have
\begin{align}\label{mt3-q3}
\beta=\frac{AC(-1)g(\overline{A}C)}{q(q-1)}\sum_{ \psi}\frac{g(\overline{C\psi})g(A\overline{B}\psi)g(A\psi)g(B\overline{C\psi})}{g(A)
g(\overline{C})}\overline{B}C(y)\psi(xy).
\end{align} 
By the fact that $A\neq C$ we obtain $\gamma=0.$
Using Lemma \ref{g1} on $g(B)g(\overline{B})$ and the fact that $B\neq\varepsilon$ we have
\begin{align}
 L&=\frac{B(-1)}{q(q-1)^2}\sum_{\psi, \chi }\frac{g(A\psi)g(A\overline{B}\psi)g(\overline{C\psi})g(B\chi)
 g(B\overline{C}\chi)g(\overline{B\chi})}{g(A)g(\overline{C})g(A\overline{B\chi}\psi)g(B\overline{C\psi}\chi)} \notag \\
 &\hspace{4cm}\times g(\overline{\chi})g(\overline{\psi})\psi(-x)\chi(-y) + \alpha + \beta. \notag
\end{align}
Again, using Lemma \ref{g1} on $g(B\chi)g(\overline{B\chi})$ we have
\begin{align}
 L&=\frac{1}{(q-1)^2}\sum_{\psi, \chi }\frac{g(A\psi)g(A\overline{B}\psi)g(\overline{C\psi})
 g(B\overline{C}\chi)}{g(A)g(\overline{C})g(A\overline{B\chi}\psi)g(B\overline{C\psi}\chi)} \notag \\
 &\hspace{3cm}\times g(\overline{\chi})g(\overline{\psi})\psi(-x)\chi(y)-\alpha_1 + \alpha + \beta, \notag
\end{align}
where
\begin{align}
\alpha_1=\frac{B(-1)}{q(q-1)}\sum_{\psi, \chi}\frac{g(A\psi)g(A\overline{B}\psi)g(\overline{C\psi})g(B\overline{C}\chi)}{g(A)g(\overline{C})g(A\overline{B\chi}\psi)
g(B\overline{C\psi}\chi)}g(\overline{\chi})g(\overline{\psi})\psi(-x)\chi(-y)\delta(B\chi). \notag
\end{align}
The term $\alpha_1$ is nonzero only when $\chi=\overline{B}$, and hence after putting $\chi=\overline{B}$ we obtain
\begin{align}
\alpha_1=\frac{\overline{B}(y)g(B)}{q(q-1)g(A)}\sum_{ \psi}g(A\overline{B}\psi)g(\overline{\psi})\psi(-x). \notag
\end{align}
Multiplying both numerator and denominator by $g(A\overline{B})$ and  then using  Lemma \ref{g7} we have
\begin{align}\label{mt3-q4}
\alpha_1=\frac{g(A\overline{B})g(B)}{qg(A)}\overline{B}(y)\overline{A}B(1-x).
\end{align}
Now, multiplying both numerator and denominator by $g(\overline{A}B\chi\overline{\psi})g(\overline{B}C\psi\overline{\chi})$ we have
\begin{align}
L&=\frac{1}{(q-1)^2}\sum_{\psi, \chi }\frac{g(A\psi)g(A\overline{B}\psi)g(\overline{C\psi})g(B\overline{C}\chi)g(\overline{A}B\chi\overline{\psi})
g(\overline{B}C\psi\overline{\chi})}{g(A)g(\overline{C})g(A\overline{B\chi}\psi)g(\overline{A}B\chi\overline{\psi})g(B\overline{C\psi}\chi)
g(\overline{B}C\psi\overline{\chi})} \notag \\
&\hspace{3cm}\times g(\overline{\chi})g(\overline{\psi})\psi(-x)\chi(y)-\alpha_1 + \alpha + \beta \notag \\
&=\frac{1}{(q-1)^2}\sum_{\substack{\psi, \chi \\ \overline{\chi}\psi \neq \overline{A}B, B\overline{C}}}\frac{g(A\psi)g(A\overline{B}\psi)
g(\overline{C\psi})g(B\overline{C}\chi)g(\overline{A}B\chi\overline{\psi})
g(\overline{B}C\psi\overline{\chi})}{g(A)g(\overline{C})g(A\overline{B\chi}\psi)g(\overline{A}B\chi\overline{\psi})g(B\overline{C\psi}\chi)
g(\overline{B}C\psi\overline{\chi})} \notag \\
&\hspace{3cm}\times g(\overline{\chi})g(\overline{\psi})\psi(-x)\chi(y)+\alpha_2 + \alpha_3 -\alpha_1 + \alpha + \beta, \notag 
\end{align}
where
\begin{align}
\alpha_2&=\frac{1}{(q-1)^2}\sum_{\substack{\psi \\ \overline{\chi} = \overline{A\psi}B}}\frac{g(A\psi)g(A\overline{B}\psi)
g(\overline{C\psi})g(A\overline{C}\psi)g(\varepsilon)g(\overline{A}C)g(\overline{A}B\overline{\psi})}{g(A)g(\overline{C})
g(\varepsilon)g(A\overline{C})g(\varepsilon)g(\overline{A}C)} \notag \\
&\hspace{3cm}\times g(\overline{\psi})A\overline{B}(y)\psi(-xy),  \notag\\
\alpha_3&=\frac{1}{(q-1)^2}\sum_{\substack{\psi \\ \overline{\chi} = B\overline{C\psi}}}\frac{g(A\psi)g(A\overline{B}\psi)
g(\overline{C\psi})g(\psi)g(\overline{A}C)g(\varepsilon)g(B\overline{C\psi})}{g(A)g(\overline{C})g(A\overline{C})
g(\varepsilon)g(\overline{A}C)g(\varepsilon)} \notag \\
&\hspace{3cm}\times g(\overline{\psi})\overline{B}C(y)\psi(-xy).  \notag
\end{align}
Using Lemma \ref{g1} on $g(A\overline{C})g(\overline{A}C)$ and the fact that $g(\varepsilon)= -1$ and $A\neq C$ we have
\begin{align}\label{mt3-q5}
\alpha_2&=-\frac{AC(-1)}{q(q-1)^2}\sum_{\substack{\psi \\ \overline{\chi} = \overline{A\psi}B}}\frac{g(A\psi)g(A\overline{B}\psi)
g(\overline{C\psi})g(A\overline{C}\psi)g(\overline{A}C)g(\overline{A}B\overline{\psi})}{g(A)g(\overline{C})}  \\
&\hspace{3cm}\times g(\overline{\psi})A\overline{B}(y)\psi(-xy).  \notag
\end{align}
Again, using Lemma \ref{g1} on $g(A\overline{B}\psi)g(\overline{A\psi}B)$ we have 
\begin{align}\label{mt3-q18}
\alpha_2&=-\frac{BC(-1)}{(q-1)^2}\sum_{\psi}\frac{g(A\psi)
g(\overline{C\psi})g(A\overline{C}\psi)g(\overline{A}C)g(\overline{\psi})}{g(A)g(\overline{C})}A\overline{B}(y)\psi(xy) + I_1,
\end{align}
where
\begin{align}
I_1&=\frac{AC(-1)}{q(q-1)}\sum_{\psi}\frac{g(A\psi)
g(\overline{C\psi})g(A\overline{C}\psi)g(\overline{A}C)g(\overline{\psi})}{g(A)g(\overline{C})}A\overline{B}(y)\psi(-xy)\delta(A\overline{B}\psi).\notag
\end{align}
The term $I_1$ is nonzero only when $\psi=\overline{A}B$. After putting $\psi=\overline{A}B$ we obtain
\begin{align}\label{mt3-q19}
I_1=\frac{BC(-1)g(B)g(A\overline{BC})g(B\overline{C})g(\overline{A}C)g(A\overline{B})\overline{A}B(x)}{q(q-1)g(A)g(\overline{C})}.
\end{align}
Using Lemma \ref{g1} on $g(A\overline{C})g(\overline{A}C)$ and the fact that $g(\varepsilon)= -1$ and $A\neq C$ we have
\begin{align}\label{mt3-q6}
\alpha_3&=-\frac{AC(-1)}{q(q-1)^2}\sum_{\substack{\psi \\ \overline{\chi} = B\overline{C\psi}}}\frac{g(A\psi)g(A\overline{B}\psi)
g(\overline{C\psi})g(\psi)g(\overline{A}C)g(B\overline{C\psi})}{g(A)g(\overline{C})} \\
&\hspace{3cm}\times g(\overline{\psi})\overline{B}C(y)\psi(-xy).  \notag
\end{align}
Employing Lemma \ref{g1} on $g(\psi)g(\overline{\psi})$ we have
\begin{align}\label{mt3-q20}
\alpha_3&=-\frac{AC(-1)}{(q-1)^2}\sum_{\psi }\frac{g(A\psi)g(A\overline{B}\psi)g(\overline{C\psi})g(\overline{A}C)g(B\overline{C\psi})}
{g(A)g(\overline{C})}\overline{B}C(y)\psi(xy)+ I_2, 
\end{align}
where
\begin{align}
 I_2= \frac{AC(-1)}{q(q-1)}\sum_{\psi }\frac{g(A\psi)g(A\overline{B}\psi)g(\overline{C\psi})g(\overline{A}C)g(B\overline{C\psi})}
{g(A)g(\overline{C})}\overline{B}C(y)\psi(-xy)\delta(\psi).\notag
\end{align}
The term $I_2$ is nonzero only when $\psi=\varepsilon$, and so after putting $\psi=\varepsilon$ we obtain
\begin{align}\label{mt3-q21}
I_2=\frac{AC(-1)g(A\overline{B})g(B\overline{C})g(\overline{A}C)\overline{B}C(y)}{q(q-1)}.
\end{align}
Using Lemma \ref{g1} on $g(A\overline{B\chi}\psi)g(\overline{A}B\chi\overline{\psi})$ and $g(B\overline{C\psi}\chi)
g(\overline{B}C\psi\overline{\chi})$ we have 
\begin{align}\label{mt3-q8}
L&=\frac{AC(-1)}{q^2(q-1)^2}\sum_{\substack{\psi, \chi \\ \overline{\chi}\psi \neq \overline{A}B, B\overline{C}}}\frac{g(A\psi)g(A\overline{B}\psi)
g(\overline{C\psi})g(B\overline{C}\chi)g(\overline{A}B\chi\overline{\psi})g(\overline{B}C\psi\overline{\chi})}{g(A)g(\overline{C})}  \\
&\hspace{3cm}\times g(\overline{\chi})g(\overline{\psi})\psi(-x)\chi(y)+ \alpha_2 + \alpha_3 -\alpha_1 + \alpha + \beta. \notag
\end{align}
 The term under summation for $\overline{\chi} = \overline{A\psi}B$ in \eqref{mt3-q8} is equal to 
\begin{align}\label{mt3-q7}
&-\frac{AC(-1)}{q^2(q-1)^2}\sum_{\substack{\psi \\ \overline{\chi} = \overline{A\psi}B}}\frac{g(A\psi)g(A\overline{B}\psi)
g(\overline{C\psi})g(A\overline{C}\psi)g(\overline{A}C)g(\overline{A}B\overline{\psi})}{g(A)g(\overline{C})}  \\
&\hspace{3cm}\times g(\overline{\psi})A\overline{B}(y)\psi(-xy).\notag
\end{align}
 Hence, applying  \eqref{mt3-q5} and \eqref{mt3-q7} into \eqref{mt3-q8}, we obtain
\begin{align}\label{mt3-q9}
L&=\frac{AC(-1)}{q^2(q-1)^2}\sum_{\substack{\psi, \chi \\ \overline{\chi}\psi \neq  B\overline{C}}}\frac{g(A\psi)g(A\overline{B}\psi)
g(\overline{C\psi})g(B\overline{C}\chi)g(\overline{A}B\chi\overline{\psi})g(\overline{B}C\psi\overline{\chi})}{g(A)g(\overline{C})}  \\
&\hspace{3cm}\times g(\overline{\chi})g(\overline{\psi})\psi(-x)\chi(y)+ \frac{q-1}{q}\alpha_2 + \alpha_3 -\alpha_1 + \alpha + \beta. \notag
\end{align}
Similarly, the term under summation for $\overline{\chi} = \overline{C\psi}B$ in \eqref{mt3-q9} is equal to
\begin{align}\label{mt3-q10}
&-\frac{AC(-1)}{q^2(q-1)^2}\sum_{\substack{\psi \\ \overline{\chi} = B\overline{C\psi}}}\frac{g(A\psi)g(A\overline{B}\psi)
g(\overline{C\psi})g(\psi)g(\overline{A}C)g(B\overline{C\psi})}{g(A)g(\overline{C})} \\
&\hspace{3cm}\times g(\overline{\psi})\overline{B}C(y)\psi(-xy).  \notag
\end{align}
Then applying  \eqref{mt3-q6} and \eqref{mt3-q10} into \eqref{mt3-q9} we obtain
\begin{align}\label{mt3-q22}
L&=\frac{AC(-1)}{q^2(q-1)^2}\sum_{\psi, \chi }\frac{g(A\psi)g(A\overline{B}\psi)g(\overline{C\psi})g(B\overline{C}\chi)g(\overline{A}B\chi\overline{\psi})
g(\overline{B}C\psi\overline{\chi})}{g(A)g(\overline{C})}  \\
&\hspace{2cm}\times g(\overline{\chi})g(\overline{\psi})\psi(-x)\chi(y)+ \frac{q-1}{q}\alpha_2 + \frac{q-1}{q}\alpha_3 -\alpha_1 + \alpha + \beta. \notag
\end{align}
 Applying  \eqref{mt3-q2}, \eqref{mt3-q18} and \eqref{mt3-q3}, \eqref{mt3-q20}  into \eqref{mt3-q22} we obtain
\begin{align}
L&=\frac{AC(-1)}{q^2(q-1)^2}\sum_{\psi, \chi }\frac{g(A\psi)g(A\overline{B}\psi)g(\overline{C\psi})g(B\overline{C}\chi)g(\overline{A}B\chi\overline{\psi})
g(\overline{B}C\psi\overline{\chi})}{g(A)g(\overline{C})}  \\
&\hspace{2cm}\times g(\overline{\chi})g(\overline{\psi})\psi(-x)\chi(y)+ \frac{q-1}{q}I_1 + \frac{q-1}{q}I_2 -\alpha_1 . \notag
\end{align}
Now, multiplying both numerator and denominator by $g(\overline{A}B\overline{\psi})g(\overline{B}C\psi)g(B\overline{C})$ and then rearranging the terms, we obtain
\begin{align}
L&=\frac{AC(-1)}{q^2(q-1)^2}\sum_{\psi, \chi}\frac{g(\overline{A\psi}B\chi)g(B\overline{C}\chi)g(\overline{B\chi}C\psi)g(\overline{\chi})g(A\psi)g(\overline{C\psi})
g(A\overline{B}\psi)}{g(\overline{A}B\overline{\psi})g(B\overline{C})g(\overline{B}C\psi)g(A)g(\overline{C})}\notag \\
&\times g(\overline{A}B\overline{\psi})g(\overline{B}C\psi)g(B\overline{C}) g(\overline{\psi})\psi(-x)\chi(y)+ \frac{q-1}{q}I_1 + \frac{q-1}{q}I_2 -\alpha_1.\notag
\end{align} 
Using \eqref{mcCarty's_defn} we have
\begin{align}
L&=\frac{AC(-1)}{q^2(q-1)}\sum_{\psi}\frac{g(A\psi)g(\overline{C\psi})g(A\overline{B}\psi)g(\overline{A}B\overline{\psi})g(\overline{B}C\psi)g(B\overline{C}) 
g(\overline{\psi})}{g(A)g(\overline{C})}\psi(-x)\notag \\
&\hspace{1cm}\times{_{2}}F_1\left(\begin{array}{cc}
                \overline{A}B\overline{\psi}, & B\overline{C}\\
                 & B\overline{C\psi}
              \end{array}\mid y \right)^{\ast} + \frac{q-1}{q}I_1 + \frac{q-1}{q}I_2 -\alpha_1.\notag\\
&=\frac{AC(-1)}{q^2(q-1)}\sum_{\substack{\psi\\ \psi\neq\varepsilon,\overline{A}B}}\frac{g(A\psi)g(\overline{C\psi})g(A\overline{B}\psi)g(\overline{A}B\overline{\psi})g(\overline{B}C\psi)g(B\overline{C}) 
g(\overline{\psi})}{g(A)g(\overline{C})}\psi(-x)\notag \\
&\hspace{1cm}\times{_{2}}F_1\left(\begin{array}{cc}
                \overline{A}B\overline{\psi}, & B\overline{C}\\
                 & B\overline{C\psi}
              \end{array}\mid y \right)^{\ast} + \beta_1 + \beta_2+ \frac{q-1}{q}I_1 + \frac{q-1}{q}I_2 -\alpha_1,\notag
\end{align}
where
\begin{align}
\beta_1 &=\frac{AC(-1)g(A\overline{B})g(\overline{A}B)g(\overline{B}C)g(B\overline{C})g(\varepsilon)}{q^2(q-1)}{_{2}}F_1\left(\begin{array}{cc}
                                                                                                                \overline{A}B, & B\overline{C}\\
                                                                                                                    & B\overline{C}
                                                                                                                   \end{array}\mid y \right)^{\ast},\notag \\ 
\beta_2 &=\frac{AC(-1)g(B)g(A\overline{BC})g(\varepsilon)g(\varepsilon)g(\overline{A}C)g(B\overline{C})g(A\overline{B})\overline{A}B(-x))}
{q^2(q-1)g(A)g(\overline{C})}{_{2}}F_1\left(\begin{array}{cc}
                \varepsilon, & B\overline{C}\\
                 & A\overline{C}
              \end{array}\mid y \right)^{\ast}. \notag                                                                                                                
\end{align}
Using Lemma \ref{g1}  and the fact that $A\neq B, B\neq C $ and $g(\varepsilon)=-1$ we obtain
\begin{align}
 \beta_1&=-\frac{1}{(q-1)}{_{2}}F_1\left(\begin{array}{cc}
                                          \overline{A}B, & B\overline{C}\\
                                          & B\overline{C}
                                       \end{array}\mid y \right)^{\ast}. \notag
\end{align}
Using  \eqref{rel-2} we have
\begin{align}\label{mt3-q11}
 \beta_1={\overline{A}C\choose\overline{B}C}\frac{\overline{B}C(y)}{(q-1)}-\frac{A\overline{B}(1-y)}{q-1}.
\end{align}              
Using \eqref{rel-1} and the fact that $g(\varepsilon)= -1$ we have
\begin{align}
\beta_2 &= \frac{BC(-1)g(B)g(A\overline{BC})g(\overline{A}C)g(A\overline{B})g(B\overline{C})\overline{A}B(x)}{q^2(q-1)g(A)g(\overline{C})} \notag \\
&-{B\overline{C}\choose A\overline{C}}^{-1}\frac{BC(-1)g(B)g(A\overline{BC})g(\overline{A}C)g(A\overline{B})g(B\overline{C})\overline{A}B(x)\overline{A}C(y)
A\overline{B}(1-y)}{q^2(q-1)g(A)g(\overline{C})}.\notag
\end{align}
Applying \eqref{b3} on the second term we obtain
\begin{align}\label{mt3-q12}
\beta_2 &= \frac{BC(-1)g(B)g(A\overline{BC})g(\overline{A}C)g(A\overline{B})g(B\overline{C})\overline{A}B(x)}{q^2(q-1)g(A)g(\overline{C})}\\
&-{A\overline{B}\choose A\overline{C}}^{-1}\frac{AB(-1)g(B)g(A\overline{BC})g(\overline{A}C)g(A\overline{B})g(B\overline{C})\overline{A}B(x)\overline{A}C(y)
A\overline{B}(1-y)}{q^2(q-1)g(A)g(\overline{C})}.\notag
\end{align}
Lemma \ref{rel-4} yields
\begin{align}\label{mt3-q24}
L&=\frac{AC(-1)}{q^2(q-1)}\sum_{\substack{\psi\\ \psi\neq\varepsilon,\overline{A}B}}\frac{g(A\psi)g(\overline{C\psi})g(A\overline{B}\psi)g(\overline{A}B\overline{\psi})
g(\overline{B}C\psi)g(B\overline{C})g(\overline{\psi})}{g(A)g(\overline{C})}\psi(-x) \\
&\hspace{1cm}\times A\overline{B}(1-y){_{2}}F_1\left(\begin{array}{cc}
                  A\overline{C}, & \overline{\psi}\\
                  & B\overline{C\psi}
              \end{array}\mid y \right)^{\ast}+ \beta_1 + \beta_2 + \frac{q-1}{q}I_1 + \frac{q-1}{q}I_2 -\alpha_1,\notag
\end{align}
Using Lemma \ref{g1} on $g(A\overline{B}\psi)g(\overline{A}B\overline{\psi})$ in \eqref{mt3-q24} and the fact that $\psi\neq \overline{A}B$ we have
\begin{align}\label{mt3-q13}
L&=\frac{BC(-1)}{q(q-1)}\sum_{\substack{\psi\\ \psi\neq\varepsilon,\overline{A}B}}\frac{g(A\psi)g(\overline{C\psi})g(\overline{B}C\psi)g(B\overline{C})
g(\overline{\psi})}{g(A)g(\overline{C})}\psi(x) \\
&\times A\overline{B}(1-y){_{2}}F_1\left(\begin{array}{cc}
                                               A\overline{C}, & \overline{\psi}\\
                                               & B\overline{C\psi}
                                           \end{array}\mid y \right)^{\ast}+\beta_1 + \beta_2 + \frac{q-1}{q}I_1 + \frac{q-1}{q}I_2 -\alpha_1.\notag
\end{align}     
If we consider the term under summation for $\psi = \varepsilon$, and then using Lemma \ref{g1}, \eqref{rel-1} and the fact that $g(\varepsilon)=-1$ we obtain 
\begin{align}\label{mt3-q14}
 I_3 = -\frac{A\overline{B}(1-y)}{q-1} + {A\overline{C} \choose B\overline{C}}^{-1}\frac{\overline{B}C(y)}{q-1}.
\end{align}
Now, using  \eqref{mt3-q11} and \eqref{mt3-q14}
we have
\begin{align}
\beta_1 - I_3 &= {\overline{A}C\choose\overline{B}C}\frac{\overline{B}C(y)}{(q-1)}-{A\overline{C} \choose B\overline{C}}^{-1}
\frac{\overline{B}C(y)}{q-1}\notag\\
&= -\frac{qAC(-1)\overline{B}C(y)}{g(\overline{A}B)g(A\overline{C})g(\overline{B}C)}.\notag
\end{align}
The last equality is obtained by using  \eqref{b1}, Lemma \ref{gj1} and Lemma \ref{g1}. If we multiplying both numerator and denominator 
by $g(B\overline{C})g(A\overline{B})g(\overline{A}C)$ and then use Lemma \ref{g1} we have
\begin{align}\label{mt3-q15}
 \beta_1 - I_3= -\frac{AC(-1)g(\overline{C}B)g(A\overline{B})g(\overline{A}C)\overline{B}C(y)}{q^2}.
\end{align}
Considering the term under summation for $\psi=\overline{A}B$ in \eqref{mt3-q13}, and then using Lemma \ref{rel-5} we have
\begin{align}\label{mt3-q16}
I_4&=BC(-1)\frac{g(B)g(A\overline{BC})g(\overline{A}C)g(A\overline{B})g(B\overline{C})\overline{A}B(x)}{q(q-1)g(A)g(\overline{C})}\\
&-AB(-1){A\overline{B}\choose A\overline{C}}^{-1}\frac{g(B)g(A\overline{BC})g(\overline{A}C)g(A\overline{B})g(B\overline{C})\overline{A}B(x)A\overline{B}(1-y)
\overline{A}C(y)}{q^2(q-1)g(A)g(\overline{C})}.\notag
\end{align}
Now, using  \eqref{mt3-q12} and \eqref{mt3-q16} we have
\begin{align}\label{mt3-q17}
 \beta_2 - I_4= -\frac{BC(-1)g(B)g(A\overline{BC})g(\overline{A}C)g(A\overline{B})g(B\overline{C})\overline{A}B(x)}{q^2g(A)g(\overline{C})}.
\end{align}
Multiplying both numerator and denominator by $g(\overline{B}C)$ and then using Lemma \ref{g1} on $g(B\overline{C})g(\overline{B}C)$ and the fact that $B\neq C$, we have
\begin{align}\label{mt3-q23}
L&=\frac{1}{(q-1)}\sum_{\psi}\frac{g(A\psi)g(\overline{C\psi})g(\overline{B}C\psi)
g(\overline{\psi})}{g(A)g(\overline{C})g(\overline{B}C)}\psi(x) \\
&\times A\overline{B}(1-y){_{2}}F_1\left(\begin{array}{cc}
                                               A\overline{C}, & \overline{\psi}\\
                                               & B\overline{C\psi}
                                           \end{array}\mid y \right)^{\ast}+\beta_1 + \beta_2 -I_3-I_4 + \frac{q-1}{q}I_1 + \frac{q-1}{q}I_2 -\alpha_1.\notag
\end{align}
Employing \eqref{mt3-q19}, \eqref{mt3-q21}, \eqref{mt3-q17} and \eqref{mt3-q15} into \eqref{mt3-q23} we obtain
\begin{align}
L&=\frac{1}{(q-1)}\sum_{\psi}\frac{g(A\psi)g(\overline{C\psi})g(\overline{B}C\psi)
g(\overline{\psi})}{g(A)g(\overline{C})g(\overline{B}C)}\psi(x)\notag \\
&\hspace{1cm}\times A\overline{B}(1-y){_{2}}F_1\left(\begin{array}{cc}
                                               A\overline{C}, & \overline{\psi}\\
                                               & B\overline{C\psi}
                                           \end{array}\mid y \right)^{\ast} -\alpha_1.\notag
\end{align}
Using \eqref{mcCarty's_defn} and then rearranging the terms we have
\begin{align}
 L&=\frac{ A\overline{B}(1-y)}{(q-1)^2}\sum_{\psi,\chi}\frac{g(A\psi)g(\overline{C\psi})g(A\overline{C}\chi)g(\overline{B}C\psi\overline{\chi})g(\overline{\psi}\chi)
 g(\overline{\chi})}{g(A)g(\overline{B}C)g(A\overline{C})g(\overline{C})}\psi(x)\chi(y) - \alpha_1.\notag
\end{align}
If we apply the change of variables $\psi \mapsto \psi\chi$ and then rearranging the terms, we have
\begin{align}
 L&=\frac{ A\overline{B}(1-y)}{(q-1)^2}\sum_{\psi,\chi}\frac{g(A\psi\chi)g(\overline{B}C\psi)g(A\overline{C}\chi)g(\overline{C\psi\chi})g(\overline{\psi})
 g(\overline{\chi})}{g(A)g(\overline{B}C)g(A\overline{C})g(\overline{C})}\psi(x)\chi(xy) - \alpha_1.\notag
\end{align}
Finally, using  \eqref{def1} and  \eqref{mt3-q4} we obtain
\begin{align}
L&=A\overline{B}(1-y)F_{1}(A;\overline{B}C,A\overline{C};C;x,xy)^{*} - \frac{g(B)g(A\overline{B})\overline{B}(y)\overline{A}B(1-x)}{q\cdot g(A)}.\notag
\end{align}
Multiplying both side by $A(1-x)B(1-y)$, we complete the proof of the theorem.

\end{proof}


\end{document}